\definecolor{verylight}{gray}{0.97}
\definecolor{light}{gray}{0.9}
\definecolor{medium}{gray}{0.85}
\definecolor{dark}{gray}{0.6}
\numberwithin{equation}{section}
\def\NZQ{\mathbb}               
\def\ZZ{{\NZQ Z}}
\def\G{{\mathcal G}}
\def\pd{\textup{proj}\phantom{.}\!\textup{dim}}
\def\opn#1#2{\def#1{\operatorname{#2}}} 
\opn\chara{char} \opn\length{\ell} \opn\pd{pd} \opn\rk{rk}
\opn\projdim{proj\,dim} \opn\injdim{inj\,dim} \opn\rank{rank}
\opn\depth{depth} \opn\grade{grade} \opn\height{height}
\opn\embdim{emb\,dim} \opn\codim{codim}
\opn\Tr{Tr} \opn\bigrank{big\,rank}
\opn\superheight{superheight}\opn\lcm{lcm}
\opn\trdeg{tr\,deg}
\opn\reg{reg} \opn\lreg{lreg} \opn\ini{in} \opn\lpd{lpd}
\opn\size{size} \opn\sdepth{sdepth}
\opn\link{link}\opn\fdepth{fdepth}\opn\lex{lex}
\opn\tr{tr}
\opn\type{type}
\opn\indeg{indeg}
\opn\gap{gap}
\opn\diam{diam}
\opn\Mod{Mod}
\opn\div{div} \opn\Div{Div} \opn\cl{cl} \opn\Cl{Cl}
\opn\Spec{Spec} \opn\Supp{Supp} \opn\supp{supp} \opn\Sing{Sing}
\opn\Ass{Ass} \opn\Min{Min}\opn\Mon{Mon}
\opn\Ann{Ann} \opn\Rad{Rad} \opn\Soc{Soc}
\opn\Im{Im} \opn\Ker{Ker} \opn\Coker{Coker} \opn\Am{Am}
\opn\Hom{Hom} \opn\Tor{Tor} \opn\Ext{Ext} \opn\End{End}
\opn\Aut{Aut} \opn\id{id}
\opn\nat{nat}
\opn\pff{pf}
\opn\Pf{Pf} \opn\GL{GL} \opn\SL{SL} \opn\mod{mod} \opn\ord{ord}
\opn\Gin{Gin} \opn\Hilb{Hilb}\opn\sort{sort}
\opn\PF{PF}\opn\Ap{Ap}
\opn\dist{dist}
\opn\aff{aff}
\opn\relint{relint} \opn\st{st}
\opn\lk{lk} \opn\cn{cn} \opn\core{core} \opn\vol{vol}  \opn\inp{inp} \opn\nilpot{nilpot}
\opn\link{link} \opn\star{star}\opn\lex{lex}\opn\set{set}
\opn\width{wd}
\opn\Fr{F}
\opn\QF{QF}
\opn\G{G}
\opn\type{type}\opn\res{res}
\opn\conv{conv}
\opn\sr{sr}
\opn\gr{gr}
\def\pot#1#2{#1[\kern-0.28ex[#2]\kern-0.28ex]}
\opn\dirlim{\underrightarrow{\lim}}
\opn\inivlim{\underleftarrow{\lim}}
\let\union=\cup
\let\Union=\bigcup
\def\Implies{\ifmmode\Longrightarrow \else
	\unskip${}\Longrightarrow{}$\ignorespaces\fi}
\def\implies{\ifmmode\Rightarrow \else
	\unskip${}\Rightarrow{}$\ignorespaces\fi}
\def\iff{\ifmmode\Longleftrightarrow \else
	\unskip${}\Longleftrightarrow{}$\ignorespaces\fi}
\newtheorem{Theorem}{Theorem}[section]
\newtheorem{Lemma}[Theorem]{Lemma}
\newtheorem{Proposition}[Theorem]{Proposition}
\newtheorem{Remark}[Theorem]{Remark}
\newtheorem{Example}[Theorem]{Example}
\newtheorem{Definition}[Theorem]{Definition}
\newtheorem*{Question}{Question}
\let\epsilon\varepsilon
\let\kappa=\varkappa
\def\qed{\ifhmode\textqed\fi
	\ifmmode\ifinner\hfill\quad\qedsymbol\else\dispqed\fi\fi}
\def\textqed{\unskip\nobreak\penalty50
	\hskip2em\hbox{}\nobreak\hfill\qedsymbol
	\parfillskip=0pt \finalhyphendemerits=0}
\def\dispqed{\rlap{\qquad\qedsymbol}}
\opn\dis{dis}
\def\pnt{{\raise0.5mm\hbox{\large\bf.}}}
\opn\Lex{Lex}
\opn\Shad{Shad}
\begin{document}
\title{Macaulay's theorem for vector-spread algebras}
\author{Marilena Crupi, Antonino Ficarra, Ernesto Lax}

\address{Marilena Crupi, Department of mathematics and computer sciences, physics and earth sciences, University of Messina, Viale Ferdinando Stagno d'Alcontres 31, 98166 Messina, Italy}
\email{mcrupi@unime.it}

\address{Antonino Ficarra, Department of mathematics and computer sciences, physics and earth sciences, University of Messina, Viale Ferdinando Stagno d'Alcontres 31, 98166 Messina, Italy}
\email{antficarra@unime.it}

\address{Ernesto Lax, Department of mathematics and computer sciences, physics and earth sciences, University of Messina, Viale Ferdinando Stagno d'Alcontres 31, 98166 Messina, Italy}
\email{erlax@unime.it}

\subjclass[2020]{Primary 05E40; Secondary 13A02,13D02, 13D40, 13F55.}
	
\keywords{vector-spread monomial ideals, vector-spread strongly stable ideals, Macaulay's Theorem, Kruskal--Katona's Theorem, Hilbert functions, Betti numbers.}

\begin{abstract}
	Let $S=K[x_1,\dots,x_n]$ be the standard graded polynomial ring, with $K$ a field, and let ${\bf t}=(t_1,\dots,t_{d-1})\in\ZZ_{\ge 0}^{d-1}$, $d\ge 2$, be a $(d-1)$-tuple whose entries are non negative integers. To a {\bf t}-spread ideal $I$ in $S$, we associate a unique $f_{\bf t}$-vector and we prove that if $I$ is {\bf t}-spread strongly stable, then there exists a unique {\bf t}-spread lex ideal which shares the same $f_{\bf t}$-vector of $I$ \emph{via} the combinatorics of the {\bf t}-spread shadows of special sets of monomials of $S$. Moreover, we characterize the possible $f_{\bf t}$-vectors of {\bf t}-vector spread strongly stable ideals generalizing the well-known theorems of Macaulay and Kruskal--Katona. Finally, we prove that among all {\bf t}-spread strongly stable ideals with the same $f_{\bf t}$-vector, the {\bf t}-spread lex ideals have the largest Betti numbers. 
\end{abstract}

\maketitle

\section{Introduction}
One of the main well--studied and important numerical invariant of a graded ideal in a standard graded polynomial ring is its Hilbert function which gives the sizes of the graded components of the ideal. There is an extensive literature on this topic (see, for instance, \cite{JT} and the references therein). Usually, Hilbert functions are described using the well--known Macaulay's expansion with binomials. This fact often implies the use of combinatorial tools and furthermore the arguments consist of very clever computations with binomials. The crucial idea of Macaulay is that there exist special monomial ideals, the so called \emph{lex ideals}, that attain all possible Hilbert functions. The pivotal property is that a lex ideal grows as slowly as possible. The ``squarefree'' analogue of Macaulay’s theorem is known as the Kruskal--Katona theorem. Indeed, if Macaulay’s theorem describes the possible Hilbert functions of the graded ideals in polynomial rings, the possible $f$-vectors of a simplicial complex are characterized in the theorem of Kruskal--Katona \cite{JT, GK, JK}. In fact, the Hilbert function of the Stanley--Reisner ring of a simplicial complex $\Delta$ is determined by the $f$-vector of $\Delta$, and vice versa. Kruskal--Katona's theorem is a fundamental result in topological combinatorics and discrete geometry which quickly have aroused much interest in face enumeration questions for various classes of simplicial complexes, polytopes, and manifolds. Furthermore, it may be also interpreted as a theorem on Hilbert functions of quotients of exterior algebras \cite{AHH}. 
The lex ideals as well as squarefree lex ideals play a key role in the study of the minimal free resolutions of monomial ideals. Indeed, if one considers the stable and squarefree stable ideals and the formulas for computing their graded Betti numbers \cite{JT} one can deduce the Bigatti-Hulett theorem \cite{BAM, HH} which says that lex ideals have the largest graded Betti numbers among all graded ideals with the same Hilbert function (see also \cite{HH2, KP}). 

Let $S=K[x_1,\dots,x_n]$ be the standard graded polynomial ring, with $K$ a field, and let ${\bf t}=(t_1,\dots,t_{d-1})\in\ZZ_{\ge0}^{d-1}$, $d\ge 2$,  be a $(d-1)$-tuple whose entries are non negative integers. Recently in \cite{F1}, the class of ${\bf t}$\textit{-spread strongly stable ideals} has been introduced. It is a special class of monomial ideals which generalizes the class of $t$-spread strongly stable ideals in \cite{EHQ} (see also, \cite{ACF, DHQ} and the references therein), $t$ non negative integer. More in detail, a monomial $u=x_{j_1}x_{j_2}\cdots x_{j_\ell}$ ($1\le j_1\le j_2\le\cdots\le j_\ell\le n$) of degree $\ell\le d$ of $S$ is called a \textit{vector-spread monomial of type ${\bf t}$} or simply a \textit{${\bf t}$-spread monomial} if $j_{i+1}-j_{i}\ge t_{i}$, for $i=1,\dots,\ell-1$ and a \textit{${\bf t}$-spread monomial ideal} is a monomial ideal of $S$ generated by ${\bf t}$-spread monomials. A \textit{${\bf t}$-spread strongly stable ideal} is a \textit{${\bf t}$-spread monomial ideal} with an additional combinatorial property (Definition \ref{def:stronglylex}). For ${\bf t} = (0, \ldots, 0), (1, \ldots, 1)$, one obtains the classical notions of strongly stable ideal and squarefree strongly stable ideal, respectively \cite{JT}. 
The aim of this article is to generalize Macaulay's theorem for the class of ${\bf t}$-spread strongly stable ideals. The crucial role is played by the class of \textit{${\bf t}$-spread lex ideal} (Definition \ref{def:stronglylex}). Since, ${\bf t} \in \ZZ_{\ge0}^{d-1}$, \emph{i.e.}, the entries of ${\bf t}$ can be also zero, in order to unify the theory about the classification of the Hilbert functions of graded ideals of $S$, we put our attention on the classification of the possible $f_{{\bf t}}$-vector (Definition \ref{def:ft-vector}) of ${\bf t}$-spread strongly stable ideals. More in detail, we answer to the following question:

\begin{Question}
	Under which conditions  a given sequence of positive integers
	\[
	f = (f_{-1}, f_{0}, \ldots, f_{d-1}),
	\]
	is the $f_{{\bf t}}$-vector of a ${\bf t}$-spread strongly stable ideal?
\end{Question}
 
The paper is organized as the following. Section \ref{sec1} contains some preliminaries and notions that will be used in the article. We introduce the  notion of \textit{${\bf t}$-spread shadow} of a set of monomials of $S$ and the notions of ${\bf t}$\textit{-spread strongly stable set (ideal)}  and ${\bf t}$\textit{-spread lex set (ideal)}  (Definitions \ref{def:stronglylexset}, \ref{def:stronglylex}). The combinatorics of such sets is deeply analyzed in Section \ref{sec2}. The key result in the section is Theorem \ref{Thm:BayerVectSpread} which allows us to prove that to every ${\bf t}$-spread strongly stable ideal $I$ one can associate a unique ${\bf t}$-spread lex ideal which has the same $f_{\bf t}$-vector  as $I$ (Corollary \ref{Cor:SubstituteTLex}). Moreover, we point out why this is not possible for an arbitrary ${\bf t}$-spread monomial ideal. Section \ref{sec3} contains the main result in the article which gives the classification of all suitable $f_{\bf t}$-vectors of a ${\bf t}$-spread strongly stable ideal (Theorem \ref{thm:main}). The classification is obtained by introducing a new operator (Definition \ref{Def:vector-spreadOp}) which, for suitable values of ${\bf t}$, is analog either to the operator $a\longrightarrow a^{\langle d\rangle}$ or to the operator $a\longrightarrow a^{(d)}$ which are involved in Macaulay's theorem and in Kruskal--Katona's theorem, respectively \cite{JT}. Finally, in Section \ref{sec4}, as an application of the results in the previous sections, we state an upper bound for the graded Betti numbers of the class of all ${\bf t}$-spread strongly stable ideals with a given $f_{{\bf t}}$-vector (Theorem \ref{thm:upperbound}). We prove that the ${\bf t}$-spread lex ideals give the maximal Betti numbers among all ${\bf t}$-spread strongly stable ideals with a given $f_{{\bf t}}$-vector. Such a statement generalizes the well--known result proved independently by Bigatti \cite{BAM} and Hulett \cite{HH} for graded ideals in a polynomial ring over a field of characteristic zero and afterwards generalized by Pardue \cite{KP} to any characteristic. The article contains some examples illustrating the main results developed using  \emph{Macaulay2} \cite{GDS}.

\section{Preliminaries and notations}\label{sec1}
Let $S=K[x_1,\dots,x_n]$ be the standard graded polynomial ring, with $K$ a field, and let ${\bf t}=(t_1,\dots,t_{d-1})\in\ZZ_{\ge0}^{d-1}$, $d\ge 2$, be a $(d-1)$-tuple whose entries are non negative integers. A monomial $u=x_{j_1}x_{j_2}\cdots x_{j_\ell}$ ($1\le j_1\le j_2\le\cdots\le j_\ell\le n$) of degree $\ell\le d$ of $S$ is called a \textit{vector-spread monomial of type ${\bf t}$} or simply a \textit{${\bf t}$-spread monomial} if $j_{i+1}-j_{i}\ge t_{i}$, for $i=1,\dots,\ell-1$.

If $I$ is a graded ideal of $S$ we denote by $I_j$ the $j$-graded component of $I$ and by $\indeg(I)$ the initial degree of $I$, \emph{i.e.}, the smallest $j$ such that $I_j\neq 0$. Moreover, for a monomial ideal $I\subset S$, we denote by $G(I)$ the unique minimal set of monomial generators. Furthermore, if $j\ge0$ is an integer, we set $G(I)_j=\{u\in G(I):\deg(u)=j\}$.

A \textit{${\bf t}$-spread monomial ideal} is a monomial ideal of $S$ generated by ${\bf t}$-spread monomials.
For instance, $I=(x_1x_4^2x_5,x_1x_4^2x_6,x_1x_5x_7)$ is a $(3,0,1)$-spread monomial ideal of the polynomial ring  $S=K[x_1, \ldots, x_7]$, but it is not $(3,0,2)$-spread as $x_1x_4^2x_5\in G(I)$ is not a $(3,0,2)$-spread monomial.

Note that any monomial (ideal) is ${\bf 0}$-spread, where ${\bf 0}=(0,0,\dots,0)$. If $t_i\ge1$, for all $i$, a ${\bf t}$-spread monomial (ideal) is a \textit{squarefree} monomial (ideal). 

We denote by $M_{n,\ell,{\bf t}}$ the set of all ${\bf t}$-spread monomials of degree $\ell$ in $S$. If $\ell \leq d$, by \cite[Corollary 2.4]{F1},
\begin{equation}\label{Formula:|Mn,l,t|}
\lvert M_{n,\ell,{\bf t}} \rvert = \binom{n+(\ell-1)-\sum_{j=1}^{\ell-1}t_j}{\ell}.
\end{equation}
	
\begin{Definition}\em \label{def:vector-spread.shadow}
	For a set $L$ of monomials of $S$, one defines the \textit{vector-spread shadow} or simply the \textit{${\bf t}$-spread shadow} of $L$ to be the set
	\begin{align*}
	\Shad_{\bf t}(L)\ &=\ \big\{wx_j:w\in L, wx_j\ \textup{is}\ {\bf t}\textup{-spread}, \, j=1,\dots,n \big\}.
	\end{align*}
\end{Definition}
	
Note that $\Shad_{\bf t}(M_{n,\ell,{\bf t}})=M_{n,\ell+1,{\bf t}}$ for all $\ell\ge 0$ and that $\Shad_{\bf t}(L)=\emptyset$ whenever all monomials in $L$ have degrees $\ge d$. Moreover, one can quickly observe that if $L$ is a set of monomials of $S$, then the definition of $\Shad_{\bf 0}(L)$ coincides with the classical notion of shadow of $L$ \cite[Chapter 6]{JT}.

\begin{Definition}\em\label{def:ft-vector}
	If $I$ is a monomial ideal of $S$, we denote by $[I_j]_{\bf t}$ the set of all ${\bf t}$-spread monomials in $I_j$. Furthermore, we set 
	\[
	f_{{\bf t},\ell-1}(I) = \vert M_{n,\ell,{\bf t}}\vert - \vert [I_\ell]_{\bf t}\vert, \qquad 0\le\ell\le d.
	\]
	and define the vector 
	\[
	f_{\bf t} (I)= (f_{{\bf t},-1}(I), f_{{\bf t},0}(I), \ldots, f_{{\bf t}, d-1}(I)).
	\]
	Such a vector is called the \textit{$f_{\bf t}$-vector} of $I$. Note that $f_{{\bf t},-1}(I)=1$.
\end{Definition}

\begin{Remark}\em
	For ${\bf t} =(t_1, \ldots, t_{d-1})$ with $t_i\ge 1$, for all $i$, then $I$ is the Stanley--Reisner ideal $I_\Delta$ of a uniquely determined simplicial complex $\Delta$ on vertex set $\{1,\ldots,n\}$ with $f_{\bf 1}(I)$ as $f$-vector, where ${\bf 1} =(1, \ldots, 1)$.
\end{Remark}

\begin{Definition}\em \label{def:stronglylexset}
	Let $L\subseteq M_{n,\ell,{\bf t}}$. $L$ is called a ${\bf t}$\textit{-spread strongly stable set} if for all $u\in L$, $j<i$ such that $x_i$ divides $u$ and $x_j(u/x_{i})$ is ${\bf t}$-spread, then $x_j(u/x_{i})\in L$. $L$ is called a ${\bf t}$-\textit{spread lex set}, if for all $u\in L$, $v\in M_{n,\ell,{\bf t}}$ such that $v\ge_{\lex}u$, then $v\in L$. 
\end{Definition}
	
Here $\ge_{\lex}$ stands for the lex order induced by $x_1>\dots>x_n$ \cite{JT}.
	
For our convenience, throughout the article, we assume the empty set to be both a ${\bf t}$-spread strongly stable set and a lex set.
	
\begin{Definition}\em \label{def:stronglylex}
	Let $I$ be a ${\bf t}$\textit{-spread ideal}. $I$ is said to be a ${\bf t}$\textit{-spread strongly stable ideal} if $[I_{\ell}]_{\bf t}$ is a ${\bf t}$\textit{-spread strongly stable set}, for all $\ell$.

	$I$ is said to be a ${\bf t}$\textit{-spread lex ideal} if $[I_{\ell}]_{\bf t}$ is a ${\bf t}$\textit{-spread lex set}, for all $\ell$.
\end{Definition}
	
One can observe that any {\bf t}-spread lex set (ideal) is a {\bf t}-spread strongly stable set (ideal). Moreover, for ${\bf t}={\bf 0}$ (${\bf t}={\bf 1}=(1,\ldots,1)$) one obtains the classical notions of (squarefree) strongly stable ideal and (squarefree) lex ideal \cite{JT}.\\

\section{Combinatorics on vector-spread shadows}\label{sec2}
In this section, if  ${\bf t} =(t_1, \ldots, t_{d-1})\in\ZZ_{\ge0}^{d-1}$, $d\ge 2$, we deal with the combinatorics of the {\bf t}-spread shadows of {\bf t}-spread strongly stable sets and {\bf t}-spread lex sets. As a consequence, given a  {\bf t}-spread strongly stable ideal $I$ of the polynomial ring $S$, we prove the existence of a unique {\bf t}-spread lex ideal of $S$ with the same $f_{\bf t}$-vector of $I$.\\

For  $i,j$ integers, we set $[i,j]=\{k\in\ZZ: i\leq k\leq j\}$. Note that $[i,j]\ne\emptyset$ if and only if $i\le j$.
 
Let $u$ be a monomial. We set $\max(u)=\max\{i:x_i\ \textup{divides}\ u\}$.

\begin{Lemma}\label{Lemma:ShadVectSS}
	Let $L\subseteq M_{n,\ell,{\bf t}}$ be a ${\bf t}$-spread strongly stable set. Then
	\[
	\Shad_{\bf t}(L)=\big\{wx_j:w\in L,\ j\ge\max(w),\ wx_j\ \textup{is}\ {\bf t}\textup{-spread}\big\}.
	\]
\end{Lemma}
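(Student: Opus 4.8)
The plan is to prove the two inclusions separately. The inclusion $\supseteq$ is immediate from the definition of the ${\bf t}$-spread shadow: if $w\in L$, $j\ge\max(w)$, and $wx_j$ is ${\bf t}$-spread, then taking $i=j$ in the definition of $\Shad_{\bf t}(L)$ shows $wx_j=x_jw\in\Shad_{\bf t}(L)$; no use of strong stability is needed here.

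For the inclusion $\subseteq$ the key reduction is to show that every $u\in\Shad_{\bf t}(L)$ satisfies $u/x_{\max(u)}\in L$. Granting this, we may write $u=(u/x_{\max(u)})\,x_{\max(u)}$ with $\max(u)\ge\max(u/x_{\max(u)})$, and since $u$ is ${\bf t}$-spread this exhibits $u$ as an element of the right-hand set. To prove the reduction, write $u=x_iw$ with $w\in L$ and $u$ ${\bf t}$-spread, and set $m=\max(u)=\max\{i,\max(w)\}$. If $i\ge\max(w)$ then $m=i$ and $u/x_m=w\in L$. If instead $i<\max(w)$, then $m=\max(w)$ and $u/x_m=(x_iw)/x_{\max(w)}=x_i\,(w/x_{\max(w)})$; here I would invoke the definition of a ${\bf t}$-spread strongly stable set: since $w\in L$, $x_{\max(w)}$ divides $w$, and $i<\max(w)$, the monomial $x_i\,(w/x_{\max(w)})$ lies in $L$ provided it is ${\bf t}$-spread.

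It therefore remains to note that $u/x_{\max(u)}$ is ${\bf t}$-spread whenever $u$ is: writing $u=x_{j_1}\cdots x_{j_\ell}$ with $j_1\le\cdots\le j_\ell$ and $j_{k+1}-j_k\ge t_k$ for $k=1,\dots,\ell-1$, deletion of the largest-index factor yields $u/x_{j_\ell}=x_{j_1}\cdots x_{j_{\ell-1}}$, which still satisfies the gap conditions $j_{k+1}-j_k\ge t_k$ for $k=1,\dots,\ell-2$. I do not expect a genuine obstacle here; the only point requiring care is the bookkeeping in the case $i<\max(w)$, namely the identification $u/x_{\max(u)}=x_i\,(w/x_{\max(w)})$ and the verification that the ${\bf t}$-spread property survives the substitution, after which the statement is a direct consequence of the strong stability hypothesis.
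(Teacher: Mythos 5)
Your proof is correct and follows essentially the same route as the paper's: both reduce the inclusion $\subseteq$ to showing $u/x_{\max(u)}\in L$, split on whether the adjoined variable has index $\ge\max(w)$ or not, and in the latter case invoke ${\bf t}$-spread strong stability after observing that deleting the largest-index variable preserves the ${\bf t}$-spread condition. Your write-up is a bit more explicit than the paper's (you spell out the easy $\supseteq$ inclusion and the gap-condition bookkeeping, which the paper passes over), but the underlying argument is the same.
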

\begin{proof}
	Let $u\in\Shad_{\bf t}(L)$. Then, $u=w x_j $ for some $w\in L$. If $\max (w)\leq j$ there is nothing to prove. Suppose $j<\max(w)$. We can write $u=x_{\max (u)}w'$, with $w' = x_j(u/x_{\max (u)})$ and $\max(w')\le\max(u)$. The proof is complete if we show that $w'\in L$. Let $u=x_{j_1}\cdots x_{j_{\ell+1}}$ with $j_1\le\dots\le j_{\ell+1}$. Then, $j=j_p$ for some $p<\ell+1$ and $w'=x_{j_1}\cdots x_{j_\ell}=x_{j_p}(u/x_{\max(u)})$ is a {\bf t}-spread monomial because $w$ is {\bf t}-spread. Moreover, $w'\in L$ since $j_p<\max(u)=j_{\ell+1}$ and $L$ is a {\bf t}-spread strongly stable set.
\end{proof}
	
\begin{Proposition}\label{Prop:Shadt(L)}
	Let $L\subseteq M_{n,\ell,{\bf t}}$ be a ${\bf t}$-spread set. 
	\begin{enumerate}
		\item[\textup{(a)}] If $L$ is a ${\bf t}$-spread strongly stable set, then $\Shad_{\bf t}(L)\subseteq M_{n,\ell+1,{\bf t}}$ is a ${\bf t}$-spread strongly stable set.
		\item[\textup{(b)}] If $L$ is a ${\bf t}$-spread lex set, then $\Shad_{\bf t}(L)\subseteq M_{n,\ell+1,{\bf t}}$ is a ${\bf t}$-spread lex set.
	\end{enumerate}
\end{Proposition}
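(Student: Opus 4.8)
The plan is to prove (a) and (b) separately, but in both cases to start from the normal form for shadow elements supplied by Lemma~\ref{Lemma:ShadVectSS}. Since every ${\bf t}$-spread lex set is ${\bf t}$-spread strongly stable, that lemma is available under either hypothesis, and it lets us write an arbitrary $u\in\Shad_{\bf t}(L)$ as $u=w\,x_{\max(u)}$ with $w:=u/x_{\max(u)}\in L$ and $\max(w)\le\max(u)$. The only elementary fact I would use repeatedly is that deleting the largest-index variable from a ${\bf t}$-spread monomial again yields a ${\bf t}$-spread monomial, because the surviving index gaps $j_{q+1}-j_q\ge t_q$ form a subset of the original ones; I would record this once at the outset. (The degenerate case in which all monomials of $L$ have degree $\ge d$ is trivial, since then $\Shad_{\bf t}(L)=\emptyset$, which is both a ${\bf t}$-spread strongly stable set and a ${\bf t}$-spread lex set by convention.)

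For (a): fix $u\in\Shad_{\bf t}(L)$, write $u=w\,x_k$ with $k=\max(u)\ge\max(w)$ and $w\in L$, and take $j<i$ with $x_i\mid u$ and $u':=x_j(u/x_i)$ a ${\bf t}$-spread monomial; the goal is $u'\in\Shad_{\bf t}(L)$. I would split into the cases $i=k$ and $i<k$. If $i=k$, then $u/x_i=w$, so $u'=x_j w$ is a ${\bf t}$-spread multiple of $w\in L$ and hence lies in $\Shad_{\bf t}(L)$ by the definition of the shadow. If $i<k$, then $x_i$ already divides $w$; set $w':=x_j(w/x_i)$, so $u'=w'\,x_k$. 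Since $j<i\le\max(w)\le k$, every variable of $w'$ has index $\le k$, so $k=\max(u')$ and $w'=u'/x_k$ is ${\bf t}$-spread by the deletion fact; now ${\bf t}$-spread strong stability of $L$ gives $w'\in L$, and $u'=w'\,x_k$ exhibits $u'$ as a ${\bf t}$-spread multiple of an element of $L$.

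For (b): fix $u\in\Shad_{\bf t}(L)$ and $v\in M_{n,\ell+1,{\bf t}}$ with $v\ge_{\lex}u$; write $u=w\,x_{\max(u)}$ with $w:=u/x_{\max(u)}\in L$, and set $v':=v/x_{\max(v)}$, which lies in $M_{n,\ell,{\bf t}}$ by the deletion fact. The heart of the argument is the inequality $v'\ge_{\lex}w$. Write $u=x_{j_1}\cdots x_{j_{\ell+1}}$ and $v=x_{i_1}\cdots x_{i_{\ell+1}}$ with nondecreasing indices, so that $w=x_{j_1}\cdots x_{j_\ell}$ and $v'=x_{i_1}\cdots x_{i_\ell}$. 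If $u=v$ then $v'=w$; otherwise $v>_{\lex}u$, and letting $p$ be the first position with $i_p\ne j_p$ one has $i_p<j_p$, so for $p\le\ell$ the sequences defining $v'$ and $w$ differ at $p$ in the same direction ($v'>_{\lex}w$), while for $p=\ell+1$ one has $v'=w$. In every case $v'\ge_{\lex}w$; since $L$ is a ${\bf t}$-spread lex set with $v'\in M_{n,\ell,{\bf t}}$ and $w\in L$, we conclude $v'\in L$, and then $v=v'\,x_{\max(v)}$ is a ${\bf t}$-spread multiple of $v'\in L$, so $v\in\Shad_{\bf t}(L)$.

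I expect the only genuinely delicate point to be the bookkeeping in the case $i<k$ of part (a): one must be certain that the variable $x_k$ re-added on top of $w'$ really is the maximum of $u'$, so that the deletion fact legitimately gives that $w'$ is ${\bf t}$-spread and strong stability of $L$ can be invoked on it. This is exactly where the inequality $\max(w)\le\max(u)=k$ from Lemma~\ref{Lemma:ShadVectSS}, together with $j<i\le\max(w)$, does the work; everything else reduces to routine checking once the normal form and the deletion fact are in place.
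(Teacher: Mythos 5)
Your proof is correct and follows essentially the same route as the paper's: both reduce to the normal form $u=w\,x_{\max(u)}$ supplied by Lemma~\ref{Lemma:ShadVectSS}, then in (a) split on whether the removed variable is the maximal one, and in (b) compare $v/x_{\max(v)}$ with $u/x_{\max(u)}$ in the lex order. You are merely more explicit about the small steps the paper leaves tacit (the ``deletion fact,'' the verification that $\max(u')=k$ before invoking strong stability, and the position-by-position argument behind $v'\ge_{\lex}w$), which is harmless and arguably helpful.
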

\begin{proof}
	Let $u=w x_j\in\Shad_{\bf t}(L)$. For the proofs of both (a) and (b), by Lemma \ref{Lemma:ShadVectSS}, we can assume that $\max(w)\le j$.\medskip\\
	(a) Let $i<k$ such that $x_k$ divides $u$ and $u'=(u/x_k)x_i$ is {\bf t}-spread. We prove that $u'\in\Shad_{\bf t}(L)$. If $k=j$, then $u'=w x_i\in\Shad_{\bf t}(L)$ by definition. Suppose $k\ne j$. Since $j=\max(u)$, then $i<k<j$ and consequently $x_k$ divides $w$. Therefore,\linebreak
	$u'=x_i(w/x_k)x_j\in\Shad_{\bf t}(L)$ because $x_i(w/x_k)\in L$ as $x_i(w/x_k)$ is a {\bf t}-spread monomial, $i<k$ and $L$ is a {\bf t}-spread strongly stable set.\smallskip\\
	(b) Let $v\in M_{n,\ell+1,{\bf t}}$ with $v>_{\lex}u$. We prove that $v\in\Shad_{\bf t}(L)$. By definition of the lex order, it follows that $v/x_{\max(v)}\ge_{\lex} u/x_{\max(u)}$. The hypothesis on $L$ guarantees that $v/x_{\max(v)}\in L$. Hence, $v=(v/x_{\max(v)})x_{\max(v)}\in\Shad_{\bf t}(L)$.
\end{proof}

Let $L\subseteq M_{n,\ell,{\bf t}}$ be a set of monomials, where $\ell\leq d$. For every $i\in[1,n]$ we denote by $m_i (L)$ the number of monomials $u\in L$ such that $\max(u)=i$ and then we set $m_{\leq j}(L) = \sum_{i=1}^{j} m_i(L)$. Note that $m_i(L)=0$ if $i\leq \sum_{j=1}^{\ell-1} t_j$.

\begin{Lemma}\label{Lemma:m_i}
	Let $L\subseteq M_{n,\ell,{\bf t}}$ be a $\bf t$-spread strongly stable set with $\ell<d$. Then
	\begin{enumerate}
		\item[\textup{(a)}] $m_i(\Shad_{\bf t}(L))=m_{\leq i-t_\ell}(L)$ for all $i$;
		\item[\textup{(b)}] $\left\lvert \Shad_{\bf t}(L) \right\rvert = \sum_{k=1+\sum_{j=1}^{\ell-1} t_j}^{n-t_\ell} m_{\leq k}(L)$.
	\end{enumerate}	
\end{Lemma}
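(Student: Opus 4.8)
The plan is to establish (a) by a direct bijective count and then obtain (b) by summing (a) over all $i$ and reindexing the resulting sum.

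For (a), I would start from the description of the shadow furnished by Lemma~\ref{Lemma:ShadVectSS}: since $L$ is ${\bf t}$-spread strongly stable, every $u\in\Shad_{\bf t}(L)$ can be written as $u=wx_j$ with $w\in L$, $j\ge\max(w)$, and $wx_j$ a ${\bf t}$-spread monomial. The key remark is that in this situation $\max(wx_j)=j$. Consequently, fixing $i$, the monomials $u\in\Shad_{\bf t}(L)$ with $\max(u)=i$ are precisely those of the form $wx_i$ with $w\in L$, $\max(w)\le i$, and $wx_i$ being ${\bf t}$-spread. Writing $w=x_{j_1}\cdots x_{j_\ell}$ with $j_\ell=\max(w)$, the monomial $wx_i$ inherits all the gap inequalities of $w$, so it is ${\bf t}$-spread if and only if the single new condition $i-j_\ell\ge t_\ell$ holds, i.e. if and only if $\max(w)\le i-t_\ell$. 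Thus the assignment $w\mapsto wx_i$ is a bijection from $\{w\in L:\max(w)\le i-t_\ell\}$ onto $\{u\in\Shad_{\bf t}(L):\max(u)=i\}$ (it is injective since $w=(wx_i)/x_i$, surjective by the previous remark, and well defined by Lemma~\ref{Lemma:ShadVectSS} because $i\ge\max(w)$ and $wx_i$ is ${\bf t}$-spread), which yields $m_i(\Shad_{\bf t}(L))=m_{\le i-t_\ell}(L)$. I would note in passing that the degenerate cases are consistent: when $i-t_\ell<1$ both sides vanish, and when $i\le\sum_{j=1}^{\ell}t_j$ both sides vanish, using the observation preceding the lemma that $m_k$ of a set of ${\bf t}$-spread monomials of degree $\ell$ is zero for $k\le\sum_{j=1}^{\ell-1}t_j$.

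For (b), I would compute $|\Shad_{\bf t}(L)|=\sum_{i=1}^{n}m_i(\Shad_{\bf t}(L))=\sum_{i=1}^{n}m_{\le i-t_\ell}(L)$ from (a), then substitute $k=i-t_\ell$, so the sum runs over $1-t_\ell\le k\le n-t_\ell$. Since $m_{\le k}(L)=0$ for every $k\le\sum_{j=1}^{\ell-1}t_j$ (again because any ${\bf t}$-spread monomial of degree $\ell$ has $\max\ge 1+\sum_{j=1}^{\ell-1}t_j$), the lower summation bound may be raised to $k=1+\sum_{j=1}^{\ell-1}t_j$ without changing the value, which is exactly the stated formula; if $n-t_\ell<1+\sum_{j=1}^{\ell-1}t_j$ the range is empty and the identity reads $0=0$, matching $\Shad_{\bf t}(L)\subseteq M_{n,\ell+1,{\bf t}}=\emptyset$. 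I do not anticipate a serious obstacle: once Lemma~\ref{Lemma:ShadVectSS} is available, the only point demanding care is verifying that $w\mapsto wx_i$ in (a) is genuinely a bijection onto the monomials of $\Shad_{\bf t}(L)$ with prescribed $\max$, together with the reduction of the ${\bf t}$-spread condition on $wx_i$ to the single inequality $\max(w)\le i-t_\ell$; everything else is bookkeeping with the vanishing ranges of the functions $m_i$.
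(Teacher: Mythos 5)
Your proof is correct and follows essentially the same route as the paper: a bijection between $\{u\in\Shad_{\bf t}(L):\max(u)=i\}$ and $\{w\in L:\max(w)\le i-t_\ell\}$ for part (a), and a reindexed summation over the vanishing-trimmed range for part (b). The only cosmetic difference is that you exhibit the bijection as $w\mapsto wx_i$, while the paper goes the other way with $u\mapsto u/x_i$; your extra remarks on the degenerate ranges are fine but not needed beyond what the observation preceding the lemma already supplies.
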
	
\begin{proof}
	(a) If $i\leq\sum_{j=1}^{\ell} t_j$ the proof is trivial. Let $i\geq 1+\sum_{j=1}^{\ell} t_j$. Consider the map
	\[\varphi : \left\{ u\in\Shad_{\bf t}(L)\, :\, \max(u)=i \right\} \longrightarrow \left\{ w \in L \, :\, \max(w)\leq i-t_\ell \right\},\]
	defined as follows. Let $u\in\Shad_{\bf t}(L)$ with $\max(u)=i$. By Lemma \ref{Lemma:ShadVectSS}, $u=w x_i$ where $w\in L$ is the unique monomial such that $\max(u)=i$. Thus, we set $\varphi(u)=w$. The map $\varphi$ is well defined by the uniqueness of $w$. To prove (a), it is enough to show that $\varphi$ is a bijection. $\varphi$ is clearly injective. To prove that $\varphi$ is surjective, let $w\in L$ with $\max(w)\leq i-t_\ell$. Then, $u=w x_i$ is ${\bf t}$-spread because $\max(w)\leq i - t_\ell$. Since $\max(u)=i$, then $u$ belongs to the domain of $\varphi$ and $\varphi(u)=w$, as desired.\medskip
	
	\noindent
	(b) Since
	$$
	\Shad_{\bf t}(L)=\bigcup_{i=1+\sum_{j=1}^{\ell}t_j}^{n}\big\{u\in\Shad_{\bf t}(L):\max(u)=i\big\},
	$$
	where the union is disjoint, by (a), $|\{u\in\Shad_{\bf t}(L):\max(u)=i\}|=m_i(\Shad_{\bf t}(L))=m_{\le i-t_\ell}(L)$, and so
	\begin{align*} 
	\left\lvert \Shad_{\bf t}(L) \right\rvert & = \sum_{i=1+\sum_{j=1}^{\ell}t_j}^{n} m_i (\Shad_{\bf t}(L)) \\
		& = \sum_{i=1+\sum_{j=1}^{\ell}t_j}^{n} m_{\leq i-t_\ell} (L) \\
		& = \sum_{k=1+\sum_{j=1}^{\ell-1}t_j}^{n-t_\ell} m_{\leq k} (L).
	\end{align*}
\end{proof}
	
The following result is a vector-spread analogue of a well known theorem due to Bayer, see \cite[Theorem 6.3.3]{JT}.
		
\begin{Theorem}\label{Thm:BayerVectSpread}
	Let $L\subset M_{n,\ell,{\bf t}}$ be a {\bf t}-spread lex set and let $N\subset M_{n,\ell,{\bf t}}$ be a {\bf t}-spread strongly stable set. Suppose $\left\lvert L \right\rvert \leq \left\lvert N \right\rvert$. Then, $m_{\leq i}(L)\leq m_{\leq i}(N)$.
\end{Theorem}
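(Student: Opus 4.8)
The plan is to begin with a reduction to the case $\lvert L\rvert=\lvert N\rvert$. If $\lvert L\rvert<\lvert N\rvert$, one deletes from $N$ its lexicographically smallest monomial $u$: since every strongly stable move applied to a monomial of a ${\bf t}$-spread strongly stable set produces a lexicographically larger monomial, $N\setminus\{u\}$ is again ${\bf t}$-spread strongly stable, and deleting one monomial cannot increase any $m_{\le i}$. Iterating, we may assume $\lvert L\rvert=\lvert N\rvert=s$.

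Next I would induct on $n+\ell$. When $n=1$ or $\ell\le 1$ the statement is trivial, because then $M_{n,\ell,{\bf t}}$ admits at most one ${\bf t}$-spread strongly stable subset of each cardinality, forcing $L=N$. For the inductive step ($n\ge 2$, $\ell\ge 2$) I would split both sets by divisibility by $x_1$,
\[
L=x_1L'\sqcup L'',\qquad N=x_1N'\sqcup N'',
\]
where $L'=\{v:x_1v\in L\}$, $L''=\{u\in L:x_1\nmid u\}$, and similarly for $N$. A monomial $x_1v$ is ${\bf t}$-spread precisely when $v$ is a $(t_2,\dots,t_{d-1})$-spread monomial of degree $\ell-1$ in $x_{1+t_1},\dots,x_n$, while the monomials of $L''$ involve only $x_2,\dots,x_n$. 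Relabelling variables — an operation preserving both $\ge_{\lex}$ and strong stability — identifies $L',N'$ with subsets of $M_{n-t_1,\ell-1,(t_2,\dots,t_{d-1})}$ and $L'',N''$ with subsets of $M_{n-1,\ell,{\bf t}}$, and a routine check shows that $L',L''$ stay lex and $N',N''$ stay strongly stable. Comparing maxima before and after relabelling yields, for all $i$,
\[
m_{\le i}(L)=m_{\le i-t_1}(L')+m_{\le i-1}(L''),\qquad m_{\le i}(N)=m_{\le i-t_1}(N')+m_{\le i-1}(N''),
\]
so it is enough to prove $m_{\le i-t_1}(L')+m_{\le i-1}(L'')\le m_{\le i-t_1}(N')+m_{\le i-1}(N'')$.

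Two structural facts would then drive the comparison. First, since $L$ is lex, every monomial divisible by $x_1$ precedes every monomial not divisible by $x_1$, so the $x_1$-decomposition of $L$ has a staircase shape: either $L''=\emptyset$ and $L'$ is the lex set of cardinality $s$, or $L'$ is the full set $M_{n-t_1,\ell-1,(t_2,\dots,t_{d-1})}$ and $L''$ is the lex set of cardinality $s-\lvert M_{n-t_1,\ell-1,(t_2,\dots,t_{d-1})}\rvert$; in either case $\lvert L'\rvert\ge\lvert N'\rvert$, hence $\lvert L''\rvert\le\lvert N''\rvert$. Second, since $N$ is strongly stable, for every $u\in N''$ the move lowering $x_{\min(u)}$ to $x_1$ is admissible, so $u/x_{\min(u)}\in N'$, which ties $N''$ to $N'$. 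From $\lvert L''\rvert\le\lvert N''\rvert$ and the inductive hypothesis in $n-1$ variables we get $m_{\le i-1}(L'')\le m_{\le i-1}(N'')$, which settles the non-$x_1$ summands. It then remains to show that the surplus coming from $\lvert L'\rvert\ge\lvert N'\rvert$ on the $x_1$-side is paid for on the other side; for this one uses that $N'$ is strongly stable (so the inductive hypothesis for degree $\ell-1$ makes $m_{\le i-t_1}(N')$ dominate $m_{\le i-t_1}$ of the lex set of the same cardinality), that enlarging a lex set by $p$ monomials increases each $m_{\le j}$ by at most $p$, and the relation just described between $N''$ and $N'$.

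The heart of the proof, and the step I expect to be the main obstacle, is precisely this last balancing act: one must track exactly, level by level, how the staircase profile recording for each $k$ the number of monomials of $L$ with minimal index $k$ compares with the corresponding profile of $N$ — the two profiles have the same total $s$, and all partial sums of the former dominate those of the latter. In the constant–spread case this is \cite[Theorem 2]{CAC}, and the same chain of binomial estimates carries over here once the shift $t_1$ on the $x_1$-part and the shift $1$ on the non-$x_1$-part are inserted in the right places and the two relabellings are kept distinct; the closed formula \eqref{Formula:|Mn,l,t|} for $\lvert M_{n,\ell,{\bf t}}\rvert$ is what anchors the induction.
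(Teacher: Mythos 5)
Your reduction to $\lvert L\rvert=\lvert N\rvert$ is valid (the lex-smallest element of a ${\bf t}$-spread strongly stable set can always be removed without destroying strong stability), and the $x_1$-decomposition $L=x_1L'\sqcup L''$, $N=x_1N'\sqcup N''$ together with the observations $\lvert L'\rvert\ge\lvert N'\rvert$, $\lvert L''\rvert\le\lvert N''\rvert$ are all correct. But your approach is genuinely different from the paper's and, as written, it has a real gap at exactly the point you flag as ``the main obstacle.'' The paper instead splits off the \emph{last} variable, writing $N=N_0\cup N_1x_n$ and $L=L_0\cup L_1x_n$. The advantage is decisive: for $i<n$ one has simply $m_{\le i}(L)=m_{\le i}(L_0)$ and $m_{\le i}(N)=m_{\le i}(N_0)$ (no index shifts, no cross terms), so the whole theorem reduces to the single cardinality inequality $\lvert L_0\rvert\le\lvert N_0\rvert$, which the paper proves by first replacing $N$ with the strongly stable set $N^*=N_0^*\cup N_1^*x_n$ whose two parts are lex, and then constructing an explicit lex-order-preserving map $\alpha:M_{n,\ell,{\bf t}}\to M_{n,\ell,{\bf t}}$ sending $\min_{>_{\lex}}N$ to $\min_{>_{\lex}}N_0$.

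Your $x_1$-split does not collapse to a single inequality, and the balancing you sketch does not close. Concretely, you propose to bound the $x_1$-side excess by $m_{\le j}(L')-m_{\le j}(N')\le p:=\lvert L'\rvert-\lvert N'\rvert=\lvert N''\rvert-\lvert L''\rvert$, using induction ($N'$ strongly stable dominates the lex set of its size) plus the crude bound that enlarging a lex set by $p$ elements raises each $m_{\le j}$ by at most $p$. To finish you would then need $p+m_{\le k}(L'')\le m_{\le k}(N'')$, i.e.\ $m_{>k}(N'')\le m_{>k}(L'')$. But the induction applied to $L'',N''$ only yields $m_{\le k}(L'')\le m_{\le k}(N'')$, and together with $\lvert L''\rvert\le\lvert N''\rvert$ this gives no control on the sign of $m_{>k}(N'')-m_{>k}(L'')$: both differences are non-positive and nothing forces the one you need. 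So the ``surplus on the $x_1$-side'' is \emph{not} automatically paid for by the other side under the estimates you invoke. The appeal to \cite[Theorem 2]{CAC} does not rescue this, because that proof (like the paper's) uses the $x_n$-decomposition and never faces a two-index coupling $m_{\le i-t_1}(\cdot)+m_{\le i-1}(\cdot)$; ``inserting the shifts $t_1$ and $1$ in the right places'' is precisely the new difficulty, not a routine adaptation. To make the $x_1$-route work you would need a genuinely sharper profile comparison (e.g.\ an inequality relating $m_{>k}(N'')$ to $m_{>k}(L'')$ directly, or a finer use of the map $u\mapsto u/x_{\min(u)}$ from $N''$ into $N'$), and none of that is supplied.
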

\begin{proof}
	We first observe that $N=N_0 \union N_1 x_n$, where $N_0$ and $N_1$ are the unique {\bf t}-spread strongly stable sets such that
	\begin{align*}
		N_0\ &=\ \{u\in N:\max(u)<n\},&
		N_1\ &=\ \{u/x_n:u\in N,\max(u)=n\}.
	\end{align*}
	
	Similarly, we can write $L=L_0\cup L_1x_n$, where $L_0$ and $L_1$ are ${\bf t}$-spread lex sets defined as above.\smallskip
	
	We proceed by induction on $n\ge1$, with the base case being trivial. Let $n>1$. Firstly, observe that $m_{\leq n}(L)=\left\lvert L \right\rvert$ and $\left\lvert N \right\rvert = m_{\leq n}(N)$. Hence, the assertion holds for $i=n$. Note that $m_{\leq n-1}(L)=|L_0|$ and $m_{\leq n-1}(N)=|N_0|$. Thus, to say that $m_{\leq n-1}(L)\leq m_{\leq n-1}(N)$ is equivalent to prove that
	\begin{equation}\label{eq:N0L0Crucial}
		|L_0|\leq|N_0|.
	\end{equation}
	Assume for a moment that inequality \eqref{eq:N0L0Crucial} holds. Then, applying our inductive hypothesis to the sets $L_0,N_0\subset M_{n-1,\ell,{\bf t}}$ we obtain
	$$
	m_{\le i}(L)=m_{\le i}(L_0)\le m_{\le i}(N_0)=m_{\le i}(N)\ \ \ \text{for}\ \ \ i=1,\dots,n-1,
	$$
	as desired. Thus, it remains to prove the inequality \eqref{eq:N0L0Crucial}. \smallskip
	
	Let $N_0^* \subset M_{n-1,\ell,{\bf t}}$ be a {\bf t}-spread lex set with $\left\lvert N_0^* \right\rvert = \left\lvert N_0 \right\rvert$ and $N_1^* \subset M_{n-t_{\ell-1},\ell-1,{\bf t}}$ be a {\bf t}-spread lex set with $\left\lvert N_1^* \right\rvert = \left\lvert N_1 \right\rvert$. Let $N^* = N_0^*\cup N_1^* x_n$. We claim that $N^*$ is a {\bf t}-spread strongly stable set. Let $u\in N^*$. We shall prove that for every $j<i$ such that $x_i$ divides $u$ and $x_j(u/x_{i})$ is ${\bf t}$-spread, then $x_j(u/x_{i}) \in N^*$. If $u\in N_0^*$ there is nothing to prove since $N_0^*$ is a ${\bf t}$-spread lex set. Suppose $u\in N_1^* x_n$, then we can write $u=wx_n$, where $w\in N_1^*$. If $i<n$, then $w'= x_j(w/x_{i})$ belongs to $N_1^*$ and $x_j(u/x_i)=w'x_n\in N_1^* x_n$. If $i=n$, then $x_j(u/x_i)=wx_j$. Now, if $x_n$ divides $wx_j$, then again $x_jw\in N_1^*x_n$. Otherwise, if $x_n$ does not divide $wx_j$, then $wx_j\in N^*$ if and only if $wx_j\in N_0^*$. Thus, we must show that $\Shad_{\bf t}(N_1^*)\subset N_0^*$. For this aim, it is sufficient to prove that $|\Shad_{\bf t}(N_1^*)|\leq |N_0^*|$, as both sets are ${\bf t}$-spread lex sets (Proposition \ref{Prop:Shadt(L)}(b)). By Lemma \ref{Lemma:m_i} and the induction hypothesis we obtain
	\begin{align*}
	|\Shad_{\bf t}(N_1^*)| & = \sum_{i=1+\sum_{j=1}^{\ell-2}t_j}^{n-t_{\ell-1}} m_{\leq i}(N_1^*)\leq \sum_{i=1+\sum_{j=1}^{\ell-2}t_j}^{n-t_{\ell-1}} m_{\leq i}(N_1) \\
	& = |\Shad_{\bf t}(N_1)|\leq |N_0|= |N_0^*|.
	\end{align*}
	Finally, $N^*$ is a {\bf t}-spread strongly stable set.\smallskip
	
	Since $|N|=|N^*|$, we may replace $N$ by $N^*$ and assume that $N_0$ is a {\bf t}-spread lex set. We suppose $n>1 + \sum_{j=1}^{\ell-1} t_j$, otherwise $M_{n,\ell,{\bf t}}=\{x_1x_{1+t_1}\cdots x_{1+\sum_{j=1}^{\ell-1} t_j}\}$ and the assertion is trivial.\smallskip
	
	Let $m=x_{j_1}\cdots x_{j_{\ell}}$ be a {\bf t}-spread monomial and $\alpha : M_{n,\ell,{\bf t}} \rightarrow M_{n,\ell,{\bf t}}$ be the map defined as follows:
	\begin{enumerate}
		\item[{\normalfont(a)}] if $j_{\ell} \neq n$, then $\alpha(m)=m$;
		\item[{\normalfont(b)}] if $j_{\ell} = n$ and $m\neq \min_{>_{\lex}}M_{n,\ell,{\bf t}}=x_{n-\sum_{j=1}^{\ell -1}t_j} x_{n-\sum_{j=2}^{\ell -1}t_j}\cdots x_{n-t_{\ell -1}}x_n$, then there exists $r\in [2,\ell]$ such that $j_r > j_{r-1} + t_{r-1}$. Hence, if $r$ is the largest integer with this property, we define
		$$\alpha(m) = x_{j_1}\cdots x_{j_{r-1}} x_{j_r - 1}\cdots x_{j_{\ell -1}-1}x_{n-1};$$
		\item[{\normalfont(c)}] if $j_{\ell} = n$ and $m=\min_{>_{\lex}}M_{n,\ell,{\bf t}}=x_{n-\sum_{j=1}^{\ell -1}t_j} x_{n-\sum_{j=2}^{\ell -1}t_j}\cdots x_{n-t_{\ell -1}}x_n$, then
			$$\alpha(m)=x_{n-1-\sum_{j=1}^{\ell -1}t_j} x_{n-1-\sum_{j=2}^{\ell -1}t_j}\cdots x_{n-1-t_{\ell -1}}x_{n-1}.$$
	\end{enumerate}
	Such map $\alpha$ is well defined and is easily seen to be a lexicographic order preserving map, \textit{i.e.}, if  $m_1,m_2\in M_{n,\ell,{\bf t}}$ and $m_1 <_{\lex} m_2$, then $\alpha(m_1)<_{\lex}\alpha(m_2)$, too.\smallskip
	
	To prove \eqref{eq:N0L0Crucial}, since both $L_0$ and $N_0$ are {\bf t}-spread lex sets, it is enough to show that $\min_{>_{\lex}} L_0 \geq_{\lex} \min_{>_{\lex}} N_0$.
	
	Let $u=\min_{>_{\lex}} L=x_{i_1}\cdots x_{i_{\ell}}$ and $v=\min_{>_{\lex}} N=x_{j_1}\cdots x_{j_{\ell}}$. We claim that $\alpha(u)=\min_{>_{\lex}}L_0$ and $\alpha(v)=\min_{>_{\lex}}N_0$. Indeed,
	\begin{enumerate}
		\item[{\normalfont(a)}] if $v\in N_0$, then $\alpha(v)=v\in N_0$
		\item[{\normalfont(b)}] if $v\in N_1 x_n$ and $\alpha(v)=x_{j_1}\cdots x_{j_{r-1}} x_{j_r - 1}\cdots x_{j_{\ell -1}-1}x_{n-1}$, where $r\in [2,\ell]$ is the largest integer such that $j_r > j_{r-1} + t_{r-1}$, then
		\begin{enumerate}
			\item[{\normalfont(i)}] if $r=\ell$, then $\alpha(v)=x_{j_1}\cdots x_{j_{\ell-1}} x_{n-1}=(v/x_n)x_{n-1}\in N_0$, because $N$ is a {\bf t}-spread strongly stable set.
			\item[{\normalfont(ii)}] if $r<\ell$, since $N$ is a {\bf t}-spread strongly stable set, we have
			\begin{align*}
			v_1&= x_{j_k -1}(v/x_{j_k})\in N,\\
			v_2&= x_{j_{k+1} -1}(v_1/x_{j_{k+1}})\in N,\\
			&\phantom{..}\vdots\\
			v_{\ell-r}&= x_{j_{\ell-1} -1}(v_{\ell -k-1}/x_{j_{\ell-1}})\in N,
			\end{align*}
			then $\alpha(v)=x_{n-1}(v_{\ell -k}/x_n)\in N_0$.
		\end{enumerate}
		\item[{\normalfont(c)}] if $v\in N_1 x_n$ and $\alpha(v)=x_{j_1 -1}\cdots x_{j_{\ell -1}-1}x_{n-1}$, we have
		\begin{align*}
		v_1&=x_{j_1 -1}(v/x_{j_1})\in N,\\
		v_2&=x_{j_2 -1}(v_1/x_{j_2})\in N,\\
		&\phantom{..}\vdots\\
		v_{\ell-1}&=x_{j_{\ell} -1}(v_{\ell-2}/x_{j_{\ell}-1})\in N,
		\end{align*}
		and $\alpha(v)=x_{n-1}(v_{\ell -1}/x_n)\in N_0$, because $N$ is a {\bf t}-spread strongly stable set.
	\end{enumerate}
	
	Hence, in all possible cases $\alpha(v)\in N_0$. Thus, we have $\min_{>_{\lex}}N_0 \leq_{\lex} \alpha(v)$, and $\min_{>_{\lex}}N_0 \geq_{\lex} v =\min_{>_{\lex}}N$. Since $\max(\min_{>_{\lex}}N_0)<n$, we have 
	$$
	\min_{>_{\lex}}N_0 = \alpha(\min_{>_{\lex}} N_0) \geq_{\lex} \alpha(v) \geq_{\lex} \min_{>_{\lex}} N_0,
	$$ 
	and so $\min_{>_{\lex}} N_0 = \alpha(v)$. Similarly one can prove that $\min_{>_{\lex}} L_0 = \alpha(u)$.
	
	Finally, since $L$ is a {\bf t}-spread lex set and $|L|\leq |N|$, we have $u\geq_{\lex}v$. Consequently, $\min_{>_{\lex}}L_0 = \alpha(u) \geq_{\lex} \alpha(v)= \min_{>_{\lex}}N_0$ and the proof is completed.
\end{proof}

As a striking consequence of the previous result, we prove that to every ${\bf t}$-spread strongly stable ideal $I$ one can associate a unique ${\bf t}$-spread lex ideal which shares the same $f_{\bf t}$-vector of $I$. It is necessary to highlight that if $I$ is an arbitrary ${\bf t}$-spread ideal of $S$, then such a ${\bf t}$-spread lex ideal does not always exist as pointed out in the next example.

\begin{Example}\em{(\cite[Remark 2]{CAC})}
	Let $I = (x_2 x_8, x_2 x_6, x_2 x_4) \subset K[x_1,\ldots,x_8]$, which is a ${\bf t}$-spread ideal, with ${\bf t} = (2)$. Such $I$ is not strongly stable, and we have the impossibility to construct a ${\bf t}$-spread lex ideal with the same $f_{\bf t}$-vector as $I$.
\end{Example}
	
Nevertheless, there can exist a ${\bf t}$-spread ideal $I$ of $S$ which is not ${\bf t}$-spread strongly stable but for which there exists a ${\bf t}$-spread lex ideal with the same $f_{\bf t}$-vector of $I$ (see, for instance, \cite[Remark 4.10]{ACF} and \cite[Remark 2]{CAC}).

\begin{Proposition}\label{Cor:SubstituteTLex}
	Let $I\subset S$ be a ${\bf t}$-spread strongly stable ideal. Then, there exists a unique ${\bf t}$-spread lex ideal $I^{{\bf t},\lex}\subset S$ such that $f_{\bf t}(I)=f_{\bf t}(I^{{\bf t},\lex})$.
\end{Proposition}
\begin{proof}
	We construct a ${\bf t}$-spread lex ideal $J$ verifying $f_{\bf t}(J)=f_{\bf t}(I)$ as follows. For all $\ell\in[0,d]$, let $L_\ell$ be the unique {\bf t}-spread lex set of $M_{n,\ell,{\bf t}}$ with $|L_\ell|=|[I_\ell]_{\bf t}|$. Whereas, for $\ell>d$ we set $L_{\ell}=\emptyset$. For all $\ell\ge0$, we denote by $J_\ell$ the $K$-vector space spanned by the monomials in the set
	$$
	L_\ell\cup\Shad_{\bf 0}(B_{\ell-1}),
	$$
	where $B_{-1}=\emptyset$ and for $\ell\ge1$, $B_{\ell-1}$ is the set of monomials in $J_{\ell-1}$. 
	
	Then, we set $J=\bigoplus_{\ell\ge0}J_{\ell}$. We claim that $J$ satisfies our statement. Firstly, we must show that $J$ is a ${\bf t}$-spread lex ideal. For this purpose, it is enough to observe that $\Shad_{\bf 0}(B_{\ell-1})\subseteq B_\ell$ for all $\ell\ge1$ by construction
	
	It remains to prove that $f_{\bf t}(J)=f_{\bf t}(I)$, \emph{i.e.}, $|[J_\ell]_{\bf t}|=|[I_{\ell}]_{\bf t}|$ for all $\ell\in[0,d]$. Let $\delta=\indeg(I)= \indeg(J)$. Then, $\delta\le d$ and $|[J_\ell]_{\bf t}|=|[I_{\ell}]_{\bf t}|=0$ for all $\ell\in[0,\delta-1]$. Now, let $\ell\ge\delta$. Since $|L_{\ell}|=|[I_\ell]_{\bf t}|$, then
	$$
	|[J_\ell]_{\bf t}|=|L_\ell\cup\Shad_{\bf t}(B_{\ell-1})|=|[I_\ell]_{\bf t}|
	$$
	if and only if $\Shad_{\bf t}(B_{\ell-1})\subseteq L_\ell$. We proceed by finite induction on $\ell\in[\delta,d]$. For the base case $\ell=\delta$, just note that $B_{\delta-1}=\emptyset$. Now, let $\ell>\delta$, then $\Shad_{\bf t}(B_{\ell-2})\subseteq L_{\ell-1}$ by the inductive hypothesis. Hence,
	\begin{align*}
	\Shad_{\bf t}(B_{\ell-1})\ &=\ \Shad_{\bf t}(L_{\ell-1}\cup\Shad_{\bf 0}(B_{\ell-2}))\\
	&=\ \Shad_{\bf t}(L_{\ell-1}\cup\Shad_{\bf t}(B_{\ell-2}))\\
	&=\ \Shad_{\bf t}(L_{\ell-1}).
	\end{align*}
	Indeed, 
	$$
	\Shad_{\bf t}(\Shad_{\bf 0}(B_{\ell-2}))=\Shad_{\bf t}(\Shad_{\bf t}(B_{\ell-2})).
	$$
	It is clear that the second set is included in the first one. For the other inclusion, let $u\in\Shad_{\bf t}(\Shad_{\bf 0}(B_{\ell-2}))$. Then, by Lemma \ref{Lemma:ShadVectSS} $u=vx_ix_j$ with $i\in[\max(v),j]$, 
	$v\in B_{\ell-2}$ and $\deg(v)=\ell-2$. Since $u$ is ${\bf t}$-spread and $\max(u)=j$, then $vx_i$ is {\bf t}-spread as well. Hence, $vx_i\in\Shad_{\bf t}(B_{\ell-2})$ and so $u\in\Shad_{\bf t}(\Shad_{\bf t}(B_{\ell-2}))$.

	Thus, it remains to prove that $\Shad_{\bf t}(L_{\ell-1})\subseteq L_{\ell}$. Both sets are {\bf t}-spread lex sets. Therefore, the previous inclusion holds if and only if $|\Shad_{\bf t}(L_{\ell-1})|\le|L_{\ell}|$. By Lemma \ref{Lemma:m_i}(b) and Theorem \ref{Thm:BayerVectSpread} applied to the sets $L_{\ell-1}$ and $[I_{\ell-1}]_{\bf t}$ satisfying $|L_{\ell-1}|=|[I_{\ell-1}]_{\bf t}|$, we have,
	\begin{align*}
	\left\lvert \Shad_{\bf t}(L_{\ell-1}) \right\rvert\ &= \sum_{i=1+\sum_{j=1}^{\ell-2} t_j}^{n-t_{\ell-1}} m_{\leq i}(L_{\ell-1})\le\sum_{i=1+\sum_{j=1}^{\ell-2} t_j}^{n-t_{\ell-1}} m_{\leq i}([I_{\ell-1}]_{\bf t})\\
	&=\ |\Shad_{\bf t}([I_{\ell-1}]_{\bf t})|\le|[I_{\ell}]_{\bf t}|=|L_\ell|.
	\end{align*}
	The inductive proof is complete. 
	
	We denote $J$ by $I^{{\bf t},\lex}$. It is clear that $I^{{\bf t},\lex}$ is the unique ideal meeting the requirements of the statement.
\end{proof}

\begin{Example}\em\label{Ex:Itlex}
	Let ${\bf t}=(1,0,2)$ and $n=6$. Consider the following ${\bf t}$-spread strongly stable ideal of $S=K[x_1,\dots,x_6]$:
	$$
	I=(x_1x_2,x_1x_3,x_1x_4,x_2x_3,x_2x_4^2,x_3x_4^2x_6).
	$$
	Then,
	\begin{align*}
	[I_\ell]_{\bf t}\ &=\ \emptyset,\ \text{for}\ \ell=0,1,\\[0.3em]
	[I_2]_{\bf t}\ &=\ \{x_1x_2,x_1x_3,x_1x_4,x_2x_3\},\\[0.3em]
	[I_3]_{\bf t}\ &=\ \{x_1x_2^2,x_1x_2x_3,x_1x_2x_4,x_1x_2x_5,x_1x_2x_6,x_1x_3^2,x_1x_3x_4,x_1x_3x_5,x_1x_3x_6,\\
	&\phantom{=\ \{.}x_1x_4^2,x_1x_4x_5,x_1x_4x_6,x_2x_3^2,x_2x_3x_4,x_2x_3x_5,x_2x_3x_6,x_2x_4^2\},\\[0.3em]
	[I_4]_{\bf t}\ &=\ \{x_1x_2^2x_4,x_1x_2^2x_5,x_1x_2^2x_6,x_1x_2x_3x_5,x_1x_2x_3x_6,x_1x_2x_4x_6,x_1x_3^2x_5,x_1x_3^2x_6,\\
	&\phantom{=\ \{.}x_1x_3x_4x_6,x_1x_4^2x_6,x_2x_3^2x_5,x_2x_3^2x_6,x_2x_3x_4x_6,x_2x_4^2x_6,x_3x_4^2x_6\},\\[0.3em]
	[I_\ell]_{\bf t}\ &=\ \emptyset,\ \text{for all}\ \ell\ge5.
	\end{align*}
	Therefore,
	\begin{align*}
	f_{\bf t}(I)\ &=\ (f_{{\bf t},-1}(I),f_{{\bf t},0}(I),f_{{\bf t},1}(I),f_{{\bf t},2}(I),f_{{\bf t},3}(I))\\
	&=\ (1,6,11,18,0).
	\end{align*}
		
	Note that the value of $f_{{\bf t},3}(I)$ depends on the fact that $[I_4]_{\bf t}=M_{6,4,{\bf t}}$.
	
	Moreover, $L_\ell=\emptyset$ for $\ell=0,1$ and for $\ell\ge5$.
	Whereas, for $\ell=2,3,4$, we have
	\begin{align*}
	L_2\ &=\ \{x_1x_2,x_1x_3,x_1x_4,x_1x_5\},\\[0.3em]
	L_3\ &=\ \{x_1x_2^2,x_1x_2x_3,x_1x_2x_4,x_1x_2x_5,x_1x_2x_6,x_1x_3^2,x_1x_3x_4,x_1x_3x_5,x_1x_3x_6,\\
	&\phantom{=\ \{.}x_1x_4^2,x_1x_4x_5,x_1x_4x_6,x_1x_5^2,x_1x_5x_6,x_1x_6^2,x_2x_3^2,x_2x_3x_4\},\\[0.3em]
	L_4\ &=\ \{x_1x_2^2x_4,x_1x_2^2x_5,x_1x_2^2x_6,x_1x_2x_3x_5,x_1x_2x_3x_6,x_1x_2x_4x_6,x_1x_3^2x_5,x_1x_3^2x_6,\\
	&\phantom{=\ \{.}x_1x_3x_4x_6,x_1x_4^2x_6,x_2x_3^2x_5,x_2x_3^2x_6,x_2x_3x_4x_6,x_2x_4^2x_6,x_3x_4^2x_6\}.
	\end{align*}
	Hence,
	$$
	I^{{\bf t},\lex}=(x_1x_2,x_1x_3,x_1x_4,x_1x_5,x_1x_6^2,x_2x_3^2,x_2x_3x_4,x_2x_4^2x_6,x_3x_4^2x_6).
	$$
\end{Example}

\section{The vector-spread Macaulay theorem}\label{sec3}
The purpose of this section is to give a classification of all possible $f_{\bf t}$-vectors of a ${\bf t}$-spread strongly stable ideal. We follow the steps of the classical Macaulay theorem, see \cite[Theorem 6.3.8]{JT}.
	
We quote the following result from \cite[Lemma 6.3.4]{JT}.
\begin{Lemma}\label{Lemma:BinExp}
	Let $\ell$ be a positive integer. Then, each positive integer $a$ has a unique expansion
	$$
	a=\binom{a_\ell}{\ell}+\binom{a_{\ell-1}}{\ell-1}+\dots+\binom{a_p}{p},
	$$
	with $a_\ell>a_{\ell-1}>\dots>a_{p}\ge p\ge1$.
\end{Lemma}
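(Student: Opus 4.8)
The plan is to prove existence and uniqueness separately, each by induction on $\ell$, with a single sharp estimate on admissible sums serving as the common engine. For \emph{existence}, I would use the greedy construction. Fix $a\ge 1$ and let $a_\ell$ be the largest integer with $\binom{a_\ell}{\ell}\le a$; this is well defined because $\binom{\ell}{\ell}=1\le a$ while $\binom{m}{\ell}\to\infty$, and necessarily $a_\ell\ge\ell$. Put $a'=a-\binom{a_\ell}{\ell}\ge 0$. Maximality of $a_\ell$ gives $a<\binom{a_\ell+1}{\ell}$, hence by Pascal's rule $a'<\binom{a_\ell+1}{\ell}-\binom{a_\ell}{\ell}=\binom{a_\ell}{\ell-1}$. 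If $a'=0$ we stop with $p=\ell$. If $a'>0$ then $\ell\ge 2$ (for $\ell=1$ one has $\binom{a_1}{0}=1$, forcing $a'=0$), and applying the inductive hypothesis to $a'$ and $\ell-1$ produces $a'=\sum_{j=p}^{\ell-1}\binom{a_j}{j}$ with $a_{\ell-1}>\dots>a_p\ge p\ge 1$; it remains only to verify $a_{\ell-1}<a_\ell$, which follows from $\binom{a_{\ell-1}}{\ell-1}\le a'<\binom{a_\ell}{\ell-1}$ together with the strict monotonicity of $x\mapsto\binom{x}{\ell-1}$ for $x\ge\ell-1$.

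The decisive estimate — call it the \emph{key inequality} — is that whenever $a_\ell>a_{\ell-1}>\dots>a_p\ge p\ge 1$ one has $\sum_{j=p}^{\ell}\binom{a_j}{j}\le\binom{a_\ell+1}{\ell}-1<\binom{a_\ell+1}{\ell}$. To see this, note $a_j\le a_\ell-(\ell-j)$ and all of $a_j,\,a_\ell-\ell+j$ lie in $[j,\infty)$, so $\binom{a_j}{j}\le\binom{a_\ell-\ell+j}{j}$; summing and invoking the hockey-stick identity $\sum_{j=0}^{\ell}\binom{m+j}{j}=\binom{m+\ell+1}{\ell}$ with $m=a_\ell-\ell\ge 0$ yields $\sum_{j=p}^{\ell}\binom{a_j}{j}\le\binom{a_\ell+1}{\ell}-\sum_{j=0}^{p-1}\binom{a_\ell-\ell+j}{j}\le\binom{a_\ell+1}{\ell}-1$, the last step using the $j=0$ term $\binom{a_\ell-\ell}{0}=1$. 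This identity, which is elementary and can be cited or proved by a one-line induction, is used twice and should be recorded explicitly.

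For \emph{uniqueness} I would again induct on $\ell$. The case $\ell=1$ is immediate, since $a=\binom{a_1}{1}=a_1$ determines $a_1$ and forces $p=1$. For $\ell\ge 2$, suppose $a=\sum_{j=p}^{\ell}\binom{a_j}{j}=\sum_{j=q}^{\ell}\binom{b_j}{j}$ are two admissible expansions. If $a_\ell>b_\ell$, then $a\ge\binom{a_\ell}{\ell}\ge\binom{b_\ell+1}{\ell}>\sum_{j=q}^{\ell}\binom{b_j}{j}=a$ by the key inequality, a contradiction; hence $a_\ell=b_\ell$. Subtracting $\binom{a_\ell}{\ell}$ leaves two equal sums. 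If that common value is $0$, both expansions reduce to the single term $\binom{a_\ell}{\ell}$, since any nonempty lower part would contribute at least $\binom{\ell-1}{\ell-1}=1$; if it is positive, the two remaining sums are admissible expansions of the same positive integer with top index $\ell-1$, so the inductive hypothesis gives $p=q$ and $a_j=b_j$ for all $j$. I do not foresee a genuine obstacle: the whole argument is elementary, and the only points that truly demand care are the bookkeeping of strict versus non-strict inequalities, the exact role of the $-1$ in the key estimate, and the degenerate case $a'=0$ (equivalently $p=\ell$).
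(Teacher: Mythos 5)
Your proof is correct and complete. Note that the paper itself does not prove this lemma---it quotes it verbatim as Lemma~6.3.4 of Herzog--Hibi's \emph{Monomial ideals}, so there is no in-paper argument to compare against; your greedy construction for existence together with the hockey-stick upper bound $\sum_{j=p}^{\ell}\binom{a_j}{j}<\binom{a_\ell+1}{\ell}$ driving both the existence bookkeeping and the uniqueness induction is exactly the standard textbook proof the paper is implicitly relying on.

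One small point worth tightening in the write-up: in the existence step you need $a_\ell\ge\ell$ before invoking $\binom{a_j}{j}\le\binom{a_\ell-\ell+j}{j}$ in the key inequality, and similarly the inductive call requires the lower block $a_{\ell-1}>\dots>a_p\ge p$ to be an admissible $(\ell-1)$-expansion on its own; both do hold (the first because $\binom{\ell}{\ell}=1\le a$, the second by construction), but since the key inequality is the engine of the whole argument it is worth stating these hypotheses explicitly rather than letting them pass silently.
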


The previous expansion is called the {\em binomial expansion} (or {\em Macaulay expansion}) of $a$ with respect to $\ell$. Hereafter, suppose we can write a positive integer $a$ as
\begin{equation}\label{eq:fakeBinExp}
a=\binom{a_{\ell}}{\ell}+\dots+\binom{a_p}{p}+\dots+\binom{a_1}{1},
\end{equation}
where $a_\ell>a_{\ell-1}>\dots>a_p\ge p$ and $a_j<j$ for $j=1,\dots,p-1$. Then, by Lemma \ref{Lemma:BinExp}, $a=\sum_{j=p}^{\ell}\binom{a_j}{j}$ is the (unique) binomial expansion of $a$ with respect to $\ell$. However, for our convenience, we refer to \eqref{eq:fakeBinExp} also as a binomial expansion of $a$.\smallskip
	
\begin{Definition}\em\label{Def:vector-spreadOp}
	Let $n,\ell$ be positive integers, ${\bf t}=(t_1,\dots,t_{d-1})\in\ZZ_{\ge0}^{d-1}$, $d\ge2$ such that $n>\sum_{j=1}^{d-1}t_j$ and $\ell<d$. For all $\ell\in[1,d-1]$, we define a \textit{{\bf t}-spread operator} as follows: for any positive integer $a\le|M_{n,\ell,{\bf t}}|$, let $a=\sum_{j=p}^{\ell}\binom{a_j}{j}$ be the binomial expansion of $a$ with respect to $\ell$. We define
	$$
	a^{(\ell,{\bf t})}=\sum_{j=p+1}^{\ell+1}\binom{a_{j-1}+1-t_\ell}{j}.
	$$
\end{Definition}
	
Let $u\in M_{n,\ell,{\bf t}}$. We define the \textit{initial ${\bf t}$-spread lexsegment set determined by $u$} to be the set
\begin{align*}
\mathcal{L}_{\bf t}^i(u)\ &=\ \{v\in M_{n,\ell,{\bf t}}:v\ge_{\lex}u\}.
\end{align*}
Note that any ${\bf t}$-spread lex set $L\subset M_{n,\ell,{\bf t}}$ is an initial ${\bf t}$-spread lexsegment set.\smallskip
	
Definition \ref{Def:vector-spreadOp} is justified by the next result.

\begin{Theorem}\label{Thm:M{n,l,t}VectorOp}
	Let $u\in M_{n,\ell,{\bf t}}$ with $\ell<d$ and $a=|M_{n,\ell,{\bf t}}\setminus\mathcal{L}_{\bf t}^i(u)|$. Then,
	$$
	\big|M_{n,\ell+1,{\bf t}}\setminus\Shad_{\bf t}(\mathcal{L}_{\bf t}^i(u))\big|\ =\ a^{(\ell,{\bf t})}.
	$$
\end{Theorem}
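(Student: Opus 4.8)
The strategy is to evaluate both sides of the claimed equality explicitly in terms of the exponents of $u$ and then to observe that the two resulting sums of binomial coefficients are related precisely by the operation of Definition~\ref{Def:vector-spreadOp}. Write $u=x_{i_1}x_{i_2}\cdots x_{i_\ell}$ with $1\le i_1\le i_2\le\cdots\le i_\ell\le n$ and $i_{s+1}-i_s\ge t_s$ for $1\le s\le\ell-1$. We may assume $a>0$, since $a=0$ forces $\mathcal{L}_{\bf t}^i(u)=M_{n,\ell,{\bf t}}$, hence $\Shad_{\bf t}(\mathcal{L}_{\bf t}^i(u))=M_{n,\ell+1,{\bf t}}$, and the identity is trivial. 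All binomial coefficients $\binom{m}{k}$ below are read as $0$ whenever $m<k$ (in particular whenever $m<0$).

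\emph{Step 1 (the degree $\ell$ side).} Since $M_{n,\ell,{\bf t}}\setminus\mathcal{L}_{\bf t}^i(u)=\{v\in M_{n,\ell,{\bf t}}:v<_{\lex}u\}$, I would partition this set by the least index $r\in\{1,\dots,\ell\}$ for which, writing $v=x_{j_1}\cdots x_{j_\ell}$ with $j_1\le\cdots\le j_\ell$, one has $j_s=i_s$ for $s<r$ and $j_r>i_r$. For a fixed $r$ and a fixed admissible $j_r$, the substitution $m_s=j_s-\sum_{q=r}^{s-1}t_q$ identifies the ${\bf t}$-spread completions $j_{r+1}\le\cdots\le j_\ell\le n$ with the weakly increasing sequences of length $\ell-r$ in an interval of integers, so their number is a single binomial coefficient; summing over the admissible range of $j_r$ and using $\sum_{m=k}^{N}\binom{m}{k}=\binom{N+1}{k+1}$ collapses the contribution of $r$ to $\binom{b_r}{\ell-r+1}$, where
\[
b_r:=n-i_r+(\ell-r)-\sum_{q=r}^{\ell-1}t_q .
\]
Hence $a=\sum_{r=1}^{\ell}\binom{b_r}{\ell-r+1}$. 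Setting $a_j:=b_{\ell-j+1}$ for $1\le j\le\ell$, the ${\bf t}$-spread condition on $u$ gives $b_r-b_{r+1}=(i_{r+1}-i_r)-t_r+1\ge1$, so $a_\ell>a_{\ell-1}>\cdots>a_1=n-i_\ell\ge0$; letting $p$ be the least index with $a_p\ge p$ (it exists because $a>0$), the terms with $j<p$ vanish and $a=\sum_{j=p}^{\ell}\binom{a_j}{j}$ with $a_\ell>\cdots>a_p\ge p\ge1$ is, by Lemma~\ref{Lemma:BinExp}, exactly the binomial expansion of $a$ with respect to $\ell$.

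\emph{Step 2 (the degree $\ell+1$ side).} Being a ${\bf t}$-spread lex set, $\mathcal{L}_{\bf t}^i(u)$ is ${\bf t}$-spread strongly stable, so Lemma~\ref{Lemma:ShadVectSS} applies and shows that for $w\in M_{n,\ell+1,{\bf t}}$ one has $w\in\Shad_{\bf t}(\mathcal{L}_{\bf t}^i(u))$ if and only if $w/x_{\max(w)}\in\mathcal{L}_{\bf t}^i(u)$, that is, $w/x_{\max(w)}\ge_{\lex}u$. Therefore
\[
\big|M_{n,\ell+1,{\bf t}}\setminus\Shad_{\bf t}(\mathcal{L}_{\bf t}^i(u))\big|=\big|\{w\in M_{n,\ell+1,{\bf t}}:w/x_{\max(w)}<_{\lex}u\}\big| .
\]
I would partition the set on the right by the least index $r$ for which, writing $w=x_{j_1}\cdots x_{j_{\ell+1}}$ with $j_1\le\cdots\le j_{\ell+1}$, one has $j_s=i_s$ for $s<r$ and $j_r>i_r$ (note $r\le\ell$, since $w/x_{\max(w)}=x_{j_1}\cdots x_{j_\ell}$ governs the comparison with $u$). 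For a fixed $r$ and $j_r$, the remaining exponents $j_{r+1}\le\cdots\le j_{\ell+1}\le n$ form an arbitrary ${\bf t}$-spread extension of $x_{i_1}\cdots x_{i_{r-1}}x_{j_r}$ by $\ell+1-r$ further indices; the same multiset count and hockey-stick identity as in Step~1 collapse the contribution of $r$ to $\binom{b_r+1-t_\ell}{\ell-r+2}$, so
\[
\big|M_{n,\ell+1,{\bf t}}\setminus\Shad_{\bf t}(\mathcal{L}_{\bf t}^i(u))\big|=\sum_{r=1}^{\ell}\binom{b_r+1-t_\ell}{\ell-r+2}.
\]

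\emph{Step 3 (matching).} Substituting $a_j=b_{\ell-j+1}$ and reindexing $r=\ell-j+1$ turns the last sum into $\sum_{j=1}^{\ell}\binom{a_j+1-t_\ell}{j+1}$. For $j<p$ we have $a_j\le j-1$, whence $a_j+1-t_\ell\le j<j+1$ and the corresponding term is $0$; consequently
\[
\big|M_{n,\ell+1,{\bf t}}\setminus\Shad_{\bf t}(\mathcal{L}_{\bf t}^i(u))\big|=\sum_{j=p}^{\ell}\binom{a_j+1-t_\ell}{j+1}=\sum_{j=p+1}^{\ell+1}\binom{a_{j-1}+1-t_\ell}{j}=a^{(n,\ell,{\bf t})},
\]
which is exactly the quantity of Definition~\ref{Def:vector-spreadOp}. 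The main obstacle is Step~1: recognizing the naive sum $\sum_{r}\binom{b_r}{\ell-r+1}$ as the genuine Macaulay expansion of Lemma~\ref{Lemma:BinExp}. The ${\bf t}$-spread hypothesis on $u$ is used here in an essential way, being precisely what forces the strict inequalities $b_1>b_2>\cdots>b_\ell$, and one must also check that the low-order terms ($j<p$) of both sums vanish, which relies on $a_1=n-i_\ell\ge0$ together with the convention on binomial coefficients. Once the admissible ranges of $j_r$ are identified correctly, the hockey-stick manipulations in Steps~1 and~2 are routine.
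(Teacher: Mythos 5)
Your proof is correct, and it takes a genuinely different route from the paper on the shadow side of the identity. Both arguments begin the same way: you partition $M_{n,\ell,{\bf t}}\setminus\mathcal{L}_{\bf t}^i(u)$ by the first index of disagreement with $u$ and obtain, via the hockey-stick identity, the same coefficients $a_j=b_{\ell-j+1}$ that the paper's Lemma~\ref{Lemma:utilda} produces (the paper collapses each block to a single binomial by invoking formula~\eqref{Formula:|Mn,l,t|} directly, but the counts agree). The divergence is in how the degree $\ell+1$ side is handled. The paper proceeds through Lemma~\ref{Lemma:utildavect}, which computes $\widetilde u=\min_{>_{\lex}}\Shad_{\bf t}(\mathcal{L}_{\bf t}^i(u))$ explicitly in terms of an auxiliary threshold $r$, then reapplies Lemma~\ref{Lemma:utilda} to $\widetilde u$ and matches coefficients $\widetilde a_j$ against $a_j$, which forces a small case analysis comparing $r+2$ with $p+1$. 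You instead use Lemma~\ref{Lemma:ShadVectSS} to rewrite membership in the shadow as the condition $w/x_{\max(w)}\ge_{\lex}u$, and then count the complement directly by the same first-disagreement partition, getting $\sum_{j=1}^{\ell}\binom{a_j+1-t_\ell}{j+1}$ in one stroke. The only remaining check is that the terms with $j<p$ vanish, which you dispose of immediately from $a_j\le j-1$ and $t_\ell\ge0$. This bypasses the determination of $\widetilde u$ altogether and collapses the paper's $r$-versus-$p$ case distinction into a single vanishing argument; what you lose is the explicit closed form for $\widetilde u$ itself, which the paper records as a lemma and illustrates with examples but which is not needed for the theorem. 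Both proofs are sound; yours is the leaner route to the stated identity.
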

	
In order to prove the theorem, we need some preliminary lemmata.\smallskip
	
Given $\emptyset\neq A \subseteq [1,n]$, we set
\[
M_{A,\ell,{\bf t}} = M_{n,\ell,{\bf t}} \cap K[x_a : a\in A].
\]
Moreover, if ${\bf t}=(t_1,\dots,t_{d-1})\in\ZZ_{\ge0}^{d-1}$, we set ${\bf t}_{\ge k}=(t_k,\dots,t_{d-1})$.

\begin{Lemma}\label{Lemma:utilda}
	Let $u=x_{i_1}\cdots x_{i_{\ell}}\in M_{n,\ell,{\bf t}}$. Then
	\begin{equation}\label{Lemma:M-Lu}
		M_{n,\ell,{\bf t}} \setminus \mathcal{L}_{\bf t}^i(u) = \Union_{k=1}^{\ell} x_{i_1}\cdots x_{i_{k-1}} M_{[i_{k}+1,n],\ell-(k-1),\, {\bf t}_{\ge k}}.
	\end{equation}
	This union is disjoint, and the binomial expansion of $\left\lvert M_{n,\ell,{\bf t}} \setminus \mathcal{L}_{\bf t}^i(u)\right\rvert$ is
	\begin{equation}\label{Lemma:|M-Lu|}
	     \left\lvert M_{n,\ell,{\bf t}} \setminus \mathcal{L}_{\bf t}^i(u)\right\rvert = \sum_{j=1}^{\ell} \binom{a_j}{j},
	\end{equation}
	where $a_j=n-i_{\ell-(j-1)}+j-1-\sum_{h=\ell-(j-1)}^{\ell-1}t_h$, for all $j\in[1,\ell]$.
\end{Lemma}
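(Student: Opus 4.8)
The plan is to identify $M_{n,\ell,{\bf t}}\setminus\mathcal{L}_{\bf t}^i(u)$ with the set of ${\bf t}$-spread degree-$\ell$ monomials that are lexicographically strictly smaller than $u=x_{i_1}\cdots x_{i_\ell}$ (with $i_1\le\cdots\le i_\ell$), and to partition this set according to the first spot in which such a monomial disagrees with $u$. A ${\bf t}$-spread monomial $v=x_{j_1}\cdots x_{j_\ell}$ ($j_1\le\cdots\le j_\ell$) satisfies $v<_{\lex}u$ precisely when there is a unique $k\in[\ell]$ with $j_1=i_1,\dots,j_{k-1}=i_{k-1}$ and $j_k>i_k$; then $v=x_{i_1}\cdots x_{i_{k-1}}\cdot w$ with $w=x_{j_k}\cdots x_{j_\ell}$ of degree $\ell-(k-1)$ and $j_k\ge i_k+1$. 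The first step is to show that such a $v$ is ${\bf t}$-spread if and only if $w\in M_{[i_k+1,n],\,\ell-(k-1),\,{\bf t}_{\ge k}}$: the internal gaps of $w$ are $j_{m+1}-j_m\ge t_m$ for $m\ge k$, which are exactly the ${\bf t}_{\ge k}$-spread conditions, and the only extra gap to check (for $k\ge 2$) is the junction $j_k-i_{k-1}\ge i_k+1-i_{k-1}\ge t_{k-1}+1$, which is automatic. Running this equivalence in both directions gives \eqref{Lemma:M-Lu}.

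For disjointness, note that in the $k$-th term every variable of $v$ past position $k-1$ exceeds $i_{k-1}$ (since $j_k\ge i_k+1>i_{k-1}$), so the sorted form of $v$ is literally $(i_1,\dots,i_{k-1},j_k,\dots,j_\ell)$; hence in the $k$-th term the $k$-th variable of $v$ is $>i_k$, whereas in every term with index $>k$ the $k$-th variable of $v$ equals $i_k$. No $v$ can satisfy both, so the union in \eqref{Lemma:M-Lu} is disjoint.

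It then remains to count. By disjointness,
\[
\bigl|M_{n,\ell,{\bf t}}\setminus\mathcal{L}_{\bf t}^i(u)\bigr|=\sum_{k=1}^{\ell}\bigl|M_{[i_k+1,n],\,\ell-(k-1),\,{\bf t}_{\ge k}}\bigr|.
\]
The order-preserving relabeling $x_{i_k+1},\dots,x_n\mapsto x_1,\dots,x_{n-i_k}$ identifies the $k$-th set with $M_{n-i_k,\,\ell-(k-1),\,{\bf t}_{\ge k}}$, so by \eqref{Formula:|Mn,l,t|} its size is $\binom{(n-i_k)+(\ell-k)-\sum_{h=k}^{\ell-1}t_h}{\ell-k+1}$. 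Reindexing by $j=\ell-(k-1)$, so that $k=1,\dots,\ell$ corresponds to $j=\ell,\dots,1$, rewrites this summand as $\binom{a_j}{j}$ with $a_j=n-i_{\ell-(j-1)}+j-1-\sum_{h=\ell-(j-1)}^{\ell-1}t_h$, which is \eqref{Lemma:|M-Lu|}. Finally, to confirm that $\sum_{j=1}^{\ell}\binom{a_j}{j}$ is a binomial expansion in the sense of \eqref{eq:fakeBinExp}, I would check $a_\ell>a_{\ell-1}>\cdots>a_1\ge0$: the bottom inequality is $a_1=n-i_\ell\ge0$, and for $2\le j\le\ell$ a direct computation gives $a_j-a_{j-1}=(i_{\ell-j+2}-i_{\ell-j+1})+1-t_{\ell-j+1}\ge 1$, the inequality being exactly the ${\bf t}$-spread condition of $u$ at index $\ell-j+1$.

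The hard part will be the first step: translating ``$v$ is ${\bf t}$-spread'' into ``$w$ is ${\bf t}_{\ge k}$-spread'' requires being careful about which entries of ${\bf t}$ govern the gaps of $w$ (namely $t_k,\dots,t_{\ell-1}$, the leading entries of ${\bf t}_{\ge k}$) and about the junction gap being free. Once that equivalence and the disjointness are in place, the cardinality count and the monotonicity of the $a_j$ are routine binomial bookkeeping, and the degenerate situations (e.g.\ $i_k=n$, or $u$ the lex-least monomial of $M_{n,\ell,{\bf t}}$) cause no trouble since the offending terms vanish on both sides.
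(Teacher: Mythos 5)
Your proposal is correct and follows essentially the same route as the paper: partition $\{v<_{\lex}u\}$ by the first index $k$ at which $v$ disagrees with $u$, identify the $k$-th block with $x_{i_1}\cdots x_{i_{k-1}}M_{[i_k+1,n],\,\ell-(k-1),\,{\bf t}_{\ge k}}$, count using \eqref{Formula:|Mn,l,t|}, reindex with $j=\ell-(k-1)$, and verify $a_{j}-a_{j-1}=i_{\ell-j+2}-i_{\ell-j+1}+1-t_{\ell-j+1}\ge 1$ via the ${\bf t}$-spread condition on $u$. If anything you supply more detail than the paper (the junction gap at position $k-1$ and the disjointness of the blocks, which the paper asserts without comment), so there is no gap.
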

\begin{proof}
	Since $\geq_{\lex}$ is a total order, we have $M_{n,\ell,{\bf t}} \setminus \mathcal{L}_{\bf t}^i(u) = \left\{v\in M_{n,\ell,{\bf t}}: v<_{\lex}u\right\}$.\linebreak
	Let $v=x_{j_1}\cdots x_{j_\ell} \in M_{n,\ell,{\bf t}}$, with $v<_{\lex} u$. Then, $i_1=j_1,\ldots,i_{k-1}=j_{k-1}$ and $i_k<j_k$, for some $k\in [1,\ell]$. Hence, $v=x_{i_1}\cdots x_{i_{k-1}} w$, where $w\in M_{[i_{k}+1,n],\ell-(k-1), {\bf t}_{\ge k}}$ and \eqref{Lemma:M-Lu} follows.
	
	To prove \eqref{Lemma:|M-Lu|} one can apply \eqref{Formula:|Mn,l,t|}, observing that the union in \eqref{Lemma:M-Lu} is disjoint and that $\lvert x_{i_1}\cdots x_{i_{k-1}}M_{[i_{k}+1,n],\ell-(k-1), {\bf t}_{\ge k}} \rvert = \lvert M_{n-i_k,\ell-(k-1), {\bf t}_{\ge k}} \rvert$. In fact,
	\begin{align*}
	\left\lvert M_{n,\ell,{\bf t}} \setminus \mathcal{L}_{\bf t}^i(u)\right\rvert & = \Big\lvert\Union_{k=1}^{\ell} x_{i_1}\cdots x_{i_{k-1}} M_{[i_{k}+1,n],\ell-(k-1),\, {\bf t}_{\ge k}}\Big\rvert \\
	 &= \sum_{k=1}^{\ell} \left\lvert M_{n-i_k,\ell-(k-1), {\bf t}_{\ge k}} \right\rvert \\
	 &= \sum_{k=1}^{\ell} \binom{n-i_k +(\ell-(k-1)-1)-\sum_{h=k}^{\ell-1}t_h}{\ell-(k-1)}\\
	 &= \sum_{j=1}^{\ell} \binom{n-i_{\ell-(j-1)}+j-1-\sum_{h=\ell-(j-1)}^{\ell-1}t_h}{j},
	\end{align*}
	where in the last equality we set $j=\ell-(k-1)$.
	
	It remains to prove that \eqref{Lemma:|M-Lu|} is the binomial expansion of $\left\lvert M_{n,\ell,{\bf t}} \setminus \mathcal{L}_{\bf t}^i(u)\right\rvert$. Let $p=\min\{j:a_j\ge j\}$. By Lemma \ref{Lemma:BinExp}, it is enough to show the following facts:
	\begin{enumerate}
		\item[(i)] $a_\ell>a_{\ell-1}>\dots>a_p\ge p$, and
		\item[(ii)] $a_j<j$, for $j=1,\dots,p-1$.
	\end{enumerate}
	
	Statement (ii) follows from the definition of $p$. For the proof of (i), let $\ell>j\ge p$. Then, we have
	$$
	a_{j+1}-a_{j}=i_{\ell-(j-1)}-i_{\ell-j}+1-t_{\ell-j}\ge t_{\ell-j}+1-t_{\ell - j}=1,
	$$
	since $i_{\ell-(j-1)}-i_{\ell-j}\ge t_{\ell-j}$. Thus,
	$$
	a_{j+1}\ge a_j+1,
	$$
	and so $a_\ell>a_{\ell-1}>\dots>a_p\ge p$, as desired.
\end{proof}
	
Let $L\subseteq M_{n,\ell,{\bf t}}$ be a ${\bf t}$-spread lex set, with $\ell<d$. By Proposition \ref{Prop:Shadt(L)}(b), $\Shad_{\bf t}(L)\subseteq M_{n,\ell+1,{\bf t}}$ is again a ${\bf t}$-spread lex set. Let
$$
u=\min_{>_{\lex}}L=x_{i_1}x_{i_2}\cdots x_{i_\ell}.
$$
Then, $L=\mathcal{L}_{\bf t}^i(u)$. Hence, if we set $\widetilde{L}=\Shad_{\bf t}(L)$ and $\widetilde{u}=\min_{>_{\lex}}\Shad_{\bf t}(L)$, then $\widetilde{L}= \mathcal{L}_{\bf t}^i(\widetilde{u})$. Therefore, to determine the ${\bf t}$-spread shadow $\widetilde{L}$ of $L$ it is enough to determine the monomial $\widetilde{u}$. This is accomplished in the next lemma.

\begin{Lemma}\label{Lemma:utildavect}
	With the notation and assumptions as above, we have
	\begin{equation}\label{eq:utilda}
	\widetilde{u}=\big(\prod_{m=1}^{\ell-r}x_{i_m}\big)\big(\prod_{j=1}^{r+1}x_{n-\sum_{p=\ell-(r-j)}^{\ell}t_p}\big),
	\end{equation}
	where we set $i_0=t_0=0$ and
	\begin{equation}\label{eq:rwidetilde(u)}
	r=\min\Big\{s\in [0,\ell] : n-i_\ell+\sum_{h=1}^{s}(i_{\ell-(h-1)}-i_{\ell-h}-t_{\ell-h})\ge t_\ell \Big\}.
	\end{equation}
\end{Lemma}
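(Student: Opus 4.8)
The plan is to prove directly that the monomial $\widetilde u$ displayed in \eqref{eq:utilda} is the lexicographically smallest element of $\Shad_{\bf t}(L)$, by passing to exponent sequences. First I would invoke Lemma \ref{Lemma:ShadVectSS}: since $L=\mathcal L_{\bf t}^i(u)$ is ${\bf t}$-spread strongly stable, a monomial $v=x_{k_1}\cdots x_{k_{\ell+1}}\in M_{n,\ell+1,{\bf t}}$ (indices weakly increasing) lies in $\Shad_{\bf t}(L)$ exactly when $v/x_{\max(v)}=x_{k_1}\cdots x_{k_\ell}\in L$, that is, when $x_{k_1}\cdots x_{k_\ell}\ge_{\lex}u$. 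Recalling that for monomials of a fixed degree $m\ge_{\lex}m'$ is equivalent to saying the weakly increasing index sequence of $m$ is $\le$ that of $m'$ in the ordinary lexicographic order on integer sequences, the problem becomes: among all sequences $(k_1,\dots,k_{\ell+1})$ with $1\le k_1\le\cdots\le k_{\ell+1}\le n$, $k_{p+1}-k_p\ge t_p$ for $p=1,\dots,\ell$, and $(k_1,\dots,k_\ell)$ lexicographically $\le(i_1,\dots,i_\ell)$, find the lexicographically \emph{largest} one; its monomial is then $\widetilde u=\min_{>_{\lex}}\Shad_{\bf t}(L)$.

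Second, I would unwind the definition of $r$. Telescoping the sum in \eqref{eq:rwidetilde(u)} shows that ``$s$ belongs to the set defining $r$'' is equivalent to $n-i_{\ell-s}\ge\sum\nolimits_{p=\ell-s}^{\ell}t_p$; and since $u$ is ${\bf t}$-spread we have $i_{\ell-s}-i_{\ell-s-1}\ge t_{\ell-s-1}$, so this inequality is inherited from $s$ to $s+1$. Hence the set is the up-set $\{r,r+1,\dots,\ell\}$, which is well defined and contained in $[0,\ell]$ because the standing hypothesis $n>\sum_{j=1}^{d-1}t_j$ makes $s=\ell$ satisfy the condition; in particular
\[
n-i_{\ell-r}\ \ge\ \sum\nolimits_{p=\ell-r}^{\ell}t_p,\qquad\text{and}\qquad n-i_{\ell-r+1}\ <\ \sum\nolimits_{p=\ell-r+1}^{\ell}t_p\quad\text{when }r\ge1.
\]
In sequence language this says: once one has set $k_1=i_1,\dots,k_{p-1}=i_{p-1}$, the choice $k_p=i_p$ still leaves room for a ${\bf t}$-spread continuation reaching at most $n$ precisely when $p\le\ell-r$, and fails to do so at $p=\ell-r+1$.

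Third, this forces the candidate sequence: keep $k_p=i_p$ for $p\le\ell-r$ (the largest value compatible with the lex-constraint), then fill the remaining $r+1$ slots as high as possible, which forces $k_{\ell-r+h}=n-\sum_{p=\ell-r+h}^{\ell}t_p$ for $h=1,\dots,r+1$ (so $k_{\ell+1}=n$) --- exactly the index sequence of $\widetilde u$ in \eqref{eq:utilda}. I then check two things. \emph{Feasibility:} the sequence is weakly increasing and ${\bf t}$-spread, the only nonroutine junction being $k_{\ell-r+1}-k_{\ell-r}\ge t_{\ell-r}$, which is the first displayed inequality; and it satisfies the lex-constraint, since it agrees with $(i_1,\dots,i_\ell)$ up to position $\ell-r$ and, when $r\ge1$, has $k_{\ell-r+1}=n-\sum_{p=\ell-r+1}^{\ell}t_p<i_{\ell-r+1}$ by the second displayed inequality (and for $r=0$ the two sequences coincide on $[\ell]$); hence $\widetilde u\in\Shad_{\bf t}(L)$. \emph{Maximality:} if a feasible sequence $(k_1,\dots,k_{\ell+1})$ were lexicographically larger than the candidate $(\widetilde k_1,\dots,\widetilde k_{\ell+1})$, let $p$ be the first position at which they differ, so $k_q=\widetilde k_q$ for $q<p$ and $k_p>\widetilde k_p$. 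If $p\le\ell-r$, then $\widetilde k_p=i_p$ and $k_q=i_q$ for $q<p$, so $(k_1,\dots,k_\ell)$ is lexicographically $>(i_1,\dots,i_\ell)$, violating the lex-constraint. If $p\ge\ell-r+1$, then $\widetilde k_p=n-\sum_{q=p}^{\ell}t_q$, and the ${\bf t}$-spread inequalities give $k_{\ell+1}\ge k_p+\sum_{q=p}^{\ell}t_q>\widetilde k_p+\sum_{q=p}^{\ell}t_q=n$, contradicting $k_{\ell+1}\le n$. Thus the candidate is the lex-largest feasible sequence, i.e.\ its monomial equals $\min_{>_{\lex}}\Shad_{\bf t}(L)$, which is \eqref{eq:utilda}.

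The one genuinely computational point is the telescoping-plus-${\bf t}$-spread step identifying the set in \eqref{eq:rwidetilde(u)} with the up-set $\{r,\dots,\ell\}$ together with the two boundary inequalities; after that dictionary is in place, only the short monotonicity/spread check at the single junction, the lex-constraint check, and the position-by-position maximality argument remain. I would also keep an eye on the degenerate ends $r=0$ (nothing of $u$ is dropped, $\widetilde u=ux_n$) and $r=\ell$ ($\widetilde u=\min_{>_{\lex}}M_{n,\ell+1,{\bf t}}$), and on the conventions $i_0=t_0=0$, which make the boundary instances of the telescoping and of the formula \eqref{eq:utilda} consistent.
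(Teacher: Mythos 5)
Your proof is correct and follows essentially the same strategy as the paper's: verify that $\widetilde u$ lies in $\Shad_{\bf t}(L)$ (equivalently that $\widetilde u/x_n\in L$, via the telescoped form of the defining condition of $r$), then establish minimality by examining the first position at which a putative lex-smaller competitor differs from $\widetilde u$, splitting into the cases $p\le\ell-r$ (violates the lex constraint inherited from $L$) and $p\ge\ell-r+1$ (forces an index beyond $n$). Your write-up is in fact slightly more careful than the paper's: the explicit up-set identification $\{s:\dots\}=\{r,\dots,\ell\}$ together with \emph{both} boundary inequalities cleanly justifies the step $\widetilde u/x_n\ge_{\lex}u$, which the paper compresses to ``by definition of $>_{\lex}$.''
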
 
\begin{proof}
	Let us prove that $\widetilde{u}$ belongs to $\widetilde{L}$. For this aim, it is enough to show that $v=\widetilde{u}/x_n\in L$. Note that 
	$$
	n-i_\ell+\sum_{h=1}^{s}(i_{\ell-(h-1)}-i_{\ell-h}-t_{\ell-h})=n-i_{\ell-s}-\sum_{h=1}^s t_{\ell-h},
	$$
	for all $s\in[0,\ell]$. Thus, $r=\min\big\{s\in [0,\ell] : n-i_{\ell-s}-\sum_{h=1}^s t_{\ell-h}\ge t_\ell \big\}$. Hence,
	$$
	\big(n-\sum_{h=1}^r t_{\ell-h}\big)-i_{\ell-r}\ge t_\ell.
	$$
	By definition of $>_{\lex}$ we have $v\ge_{\lex}u$. Since $v$ is ${\bf t}$-spread, it follows that $v\in L$.
	
	To prove that $\widetilde{u}=\min_{>_{\lex}}\widetilde{L}$, suppose by contradiction that there exists $w\in\widetilde{L}$ such that $w<_{\lex}\widetilde{u}$. Write $w=x_{j_1}\cdots x_{j_{\ell+1}}$, $\widetilde{u}=x_{k_1}\cdots x_{k_{\ell+1}}$. Then, $j_1=k_1,\dots,j_{q-1}=k_{q-1}$ and $j_q>k_q$, for some $q\in[1,\ell+1]$.
	
	If $q\ge \ell-r+1$, then $j_q>k_q=n-\sum_{p=q}^{\ell}t_p$. This is absurd, because all monomials $x_{s_1}\cdots x_{s_{\ell+1}}\in M_{n,\ell+1,{\bf t}}$ satisfy the inequalities $s_{q}\le n-\sum_{h=q}^{\ell}t_h$, $q\in[1,\ell+1]$.
	
	If $1\le q\le \ell-r$, then $j_q>k_q=i_q$. By Lemma \ref{Lemma:ShadVectSS}, $w/x_{j_{\ell+1}}=w'\in L$. Hence, $\min_{>_{\lex}} L=u>_{\lex}w'$, a contradiction. Finally, $\widetilde{u}=\min_{>_{\lex}}\widetilde{L}$.
\end{proof}

The next example illustrates the previous lemma.
\begin{Example}\em
	Let ${\bf t}=(2,1,2)$, $S=K[x_1,\dots,x_8]$, $L=\mathcal{L}^i(u)$ for some $u\in M_{8,3,{\bf t}}$. Set $\Shad_{\bf t}(L)=\widetilde{L}$ and $\widetilde{u}=\min_{>_{\lex}}\widetilde{L}$. Let $r$ the integer defined in (\ref{eq:rwidetilde(u)}).\smallskip

	Let $u=x_2x_4x_6$. Since $n-\max(u)=2=t_3$, then $r=0$ and $\widetilde{u}=ux_n=x_2x_4x_6x_8$.\smallskip

	Let $u=x_2x_6x_7$. Then, $r=2$ and $\widetilde{u}=x_2x_5x_6x_8$.\smallskip

	Let $u=x_4x_6x_7$. Then, $r=3$ and $\widetilde{u}=x_3x_5x_6x_8$. In such a case $\Shad_{\bf t}(L)=M_{8,4,{\bf t}}$.
\end{Example}

\begin{proof}[Proof of Theorem \ref{Thm:M{n,l,t}VectorOp}]
	As before, let $L=\mathcal{L}_{\bf t}^i(u)$, $\widetilde{L}=\Shad_{\bf t}(L)$ and $\widetilde{u}=\min_{>_{\lex}}\widetilde{L}$. Write $\widetilde{u}=x_{k_1}x_{k_2}\cdots x_{k_{\ell+1}}$, where the indices $k_j$ are determined in (\ref{eq:utilda}) and
	$$
	r=\min\Big\{s\in [0,\ell] : n-i_\ell+\sum_{h=1}^{s}(i_{\ell-(h-1)}-i_{\ell-h}-t_{\ell-h})\ge t_\ell \Big\}.
	$$
	
	Then, by Lemma \ref{Lemma:utilda}, we have the binomial expansions
	\begin{align*}
	|M_{n,\ell,{\bf t}}\setminus L|\ =\ \sum_{j=1}^{\ell}\binom{a_j}{j},\ \ \ \ \ \ \ \	
	|M_{n,\ell+1,{\bf t}}\setminus\widetilde{L}|\ =\ \sum_{j=1}^{\ell+1}\binom{\widetilde{a}_j}{j},
	\end{align*}
	where
	\begin{enumerate}
	\item[(a)] $a_j=n-i_{\ell-(j-1)}+j-1-\sum_{h=\ell-(j-1)}^{\ell-1}t_h$, for all $j\in[1,\ell]$, and
	\item[(b)] $\widetilde{a}_j=n-k_{\ell+1-(j-1)}+j-1-\sum_{h=\ell+1-(j-1)}^{\ell}t_h$, for all $j\in[1,\ell+1]$.
	\end{enumerate}
	
	It remains to prove that $|M_{n,\ell+1,{\bf t}}\setminus\widetilde{L}|=a^{(\ell,{\bf t})}$. Firstly, we establish how the coefficients $a_j$ and $\widetilde{a}_j$ are related. Note that, for $j\in[1,r+1]$, we have
	\begin{align*}
	\widetilde{a}_j&=n-k_{\ell+1-(j-1)}+j-1-\sum_{h=\ell+1-(j-1)}^{\ell}t_h\\
	&=n-\Big(n-\sum_{p=\ell+1-(j-1)}^{\ell}t_p\Big)+j-1-\sum_{h=\ell+1-(j-1)}^{\ell}t_h\\
	&=j-1.
	\end{align*}
	Since $\binom{j-1}{j}=0$, we may write as well
	$$
	|M_{n,\ell+1,{\bf t}}\setminus\widetilde{L}|=\sum_{j=r+2}^{\ell+1}\binom{\widetilde{a}_j}{j}.
	$$
	Instead, since $k_{\ell+1-(j-1)}=i_{\ell-(j-1)}$, for $j\in[r+2,\ell+1]$, we have
	$$
	\widetilde{a}_j=a_{j-1}+1-t_\ell.
	$$
	Therefore,
	\begin{align*}
	\big| M_{n,\ell+1,{\bf t}} \setminus\widetilde{L}\big| &= \sum_{j=r+2}^{\ell+1} \binom{a_{j-1}+1-t_{\ell}}{j}.
	\end{align*}
	Let $p=\min\{j:a_j\ge j\}$. The theorem is proved if we show that
	$$
	\big| M_{n,\ell+1,{\bf t}} \setminus\widetilde{L}\big|=\big| M_{n,\ell,{\bf t}} \setminus L\big|^{(\ell,{\bf t})}=\sum_{j=p+1}^{\ell+1}\binom{a_{j-1}+1-t_{\ell}}{j}.
	$$
	If $p+1=r+2$ this is clear. Suppose $p+1>r+2$. Then, it is enough to show that
	$$
	\binom{a_{j-1}+1-t_{\ell}}{j}=0, \ \ \ \text{for all}\ j\in[r+2,p].
	$$
	If $j\le p$, then $a_{j-1}<j-1$. Hence, $a_{j-1}+1-t_\ell\le a_{j-1}+1<j$ and $\binom{a_{j-1}+1-t_{\ell}}{j}=0$, as desired. Now let 
	$r+2>p+1$. We must prove that
	$$
	\binom{a_{j-1}+1-t_{\ell}}{j}=0, 
	$$
	for all $j\in [p+1,r+1]$. 
	Set $a_{\ell+1}=n-\sum_{j=1}^{\ell-1}t_j$. Then
	$$
	r=\min\{s\in[0,\ell]:a_{s+1}-s\ge t_\ell\}.
	$$
	If $j\le r+1$, then $j-2\le r-1$. Hence, $a_{(j-2)+1}-(j-2)=a_{j-1}-(j-2)<t_\ell$. It follows that $a_{j-1}+1-t_\ell<a_{j-1}+2-t_\ell<j$ and $\binom{a_{j-1}+1-t_{\ell}}{j}=0$, as desired.
\end{proof}

\begin{Example}\em
	Let $n=31$, ${\bf t}=(0,1,3,1)$, $a=2023$ and $\ell=3$. Then
	$$
	a=\sum_{j=1}^{\ell}\binom{a_j}{j}=\binom{23}{3}+\binom{22}{2}+\binom{21}{1}
	$$
	is the binomial expansion of $a$ with respect to $\ell$. Therefore, since $r=0$, we have
	\begin{align*}
	a^{(\ell,{\bf t})}=2023^{(3,(0,1,3,1))}\ &=\ \sum_{j=r+2}^{\ell+1}\binom{a_{j-1}+1-t_\ell}{j}\\
	&=\ \binom{19}{2}+\binom{20}{3}+\binom{21}{4}=7296.
	\end{align*}
\end{Example}

Now, we can state and prove the main result in the article.
\begin{Theorem}\label{thm:main}
	Let $f=(f_{-1},f_0,\dots,f_{d-1})$ be a sequence of non-negative integers. The following conditions are equivalent:
	\begin{enumerate}
		\item[\textup{(i)}] there exists a ${\bf t}$-spread strongly stable ideal $I\subset S=K[x_1,\dots,x_n]$ such that $$f_{\bf t}(I)=f;$$
		\item[\textup{(ii)}] $f_{-1}=1$ and $f_{\ell+1}\le f_{\ell}^{(\ell+1,{\bf t})}$, for all $\ell=-1,\dots,d-2$.
	\end{enumerate}
\end{Theorem}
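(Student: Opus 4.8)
The plan is to prove the equivalence in the style of the classical Macaulay theorem, establishing the two implications separately and leaning on two tools already available: Theorem~\ref{Thm:M{n,l,t}VectorOp}, which converts statements about ${\bf t}$-spread shadows of lexsegments into the arithmetic of the operator $a\mapsto a^{(n,\ell,{\bf t})}$, and Theorem~\ref{Thm:BayerVectSpread} (with Lemma~\ref{Lemma:m_i}(b)), which lets me replace a ${\bf t}$-spread strongly stable set by the equinumerous ${\bf t}$-spread lex set, whose ${\bf t}$-spread shadow is then the smallest possible. In the few degenerate positions ($\ell=-1$, or an empty/full lex set) I will use the conventions $0^{(n,\ell,{\bf t})}=0$ and $|M_{n,\ell,{\bf t}}|^{(n,\ell,{\bf t})}=|M_{n,\ell+1,{\bf t}}|$, both of which are simply Theorem~\ref{Thm:M{n,l,t}VectorOp} read for the empty and the full ${\bf t}$-spread lex set.

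For \textup{(i)} $\Rightarrow$ \textup{(ii)} I would start from $f=f_{\bf t}(I)$, note $f_{-1}=f_{{\bf t},-1}(I)=1$, and fix $\ell$ with $-1\le\ell\le d-2$. The case $\ell=-1$ reduces to $f_0\le|M_{n,1,{\bf t}}|=n$, immediate from $f_0=n-|[I_1]_{\bf t}|$; so assume $\ell\ge0$ and set $N=[I_{\ell+1}]_{\bf t}$, a ${\bf t}$-spread strongly stable set. Since $I$ is an ideal, $\Shad_{\bf t}(N)\subseteq[I_{\ell+2}]_{\bf t}$, hence $|[I_{\ell+2}]_{\bf t}|\ge|\Shad_{\bf t}(N)|$. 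Letting $L\subseteq M_{n,\ell+1,{\bf t}}$ be the ${\bf t}$-spread lex set with $|L|=|N|$, Lemma~\ref{Lemma:m_i}(b) and Theorem~\ref{Thm:BayerVectSpread} give $|\Shad_{\bf t}(L)|\le|\Shad_{\bf t}(N)|\le|[I_{\ell+2}]_{\bf t}|$, and therefore
$$
f_{\ell+1}=|M_{n,\ell+2,{\bf t}}|-|[I_{\ell+2}]_{\bf t}|\le|M_{n,\ell+2,{\bf t}}|-|\Shad_{\bf t}(L)|=\big|M_{n,\ell+2,{\bf t}}\setminus\Shad_{\bf t}(L)\big|.
$$
Because $|M_{n,\ell+1,{\bf t}}\setminus L|=|M_{n,\ell+1,{\bf t}}|-|N|=f_\ell$ and $\ell+1<d$, Theorem~\ref{Thm:M{n,l,t}VectorOp} (applied to the lex-minimal monomial of $L$, or via the convention when $L=\emptyset$) identifies the right-hand side with $f_\ell^{(n,\ell+1,{\bf t})}$, giving $f_{\ell+1}\le f_\ell^{(n,\ell+1,{\bf t})}$.

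For \textup{(ii)} $\Rightarrow$ \textup{(i)} I would first run a short induction on $\ell$ — using $f_{-1}=1=|M_{n,0,{\bf t}}|$, the monotonicity of $a\mapsto a^{(n,\ell,{\bf t})}$, and $|M_{n,\ell,{\bf t}}|^{(n,\ell,{\bf t})}=|M_{n,\ell+1,{\bf t}}|$ — to get $0\le f_{\ell-1}\le|M_{n,\ell,{\bf t}}|$ for $0\le\ell\le d$, so that for each such $\ell$ there is a unique ${\bf t}$-spread lex set $L_\ell\subseteq M_{n,\ell,{\bf t}}$ with $|L_\ell|=|M_{n,\ell,{\bf t}}|-f_{\ell-1}$ (and $L_\ell=\emptyset$ for $\ell>d$). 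The key step is then $\Shad_{\bf t}(L_{\ell-1})\subseteq L_\ell$ for all $\ell\ge1$: both sides are ${\bf t}$-spread lex sets by Proposition~\ref{Prop:Shadt(L)}(b), so it suffices to compare cardinalities, and Theorem~\ref{Thm:M{n,l,t}VectorOp} gives $|\Shad_{\bf t}(L_{\ell-1})|=|M_{n,\ell,{\bf t}}|-f_{\ell-2}^{(n,\ell-1,{\bf t})}$ while $|L_\ell|=|M_{n,\ell,{\bf t}}|-f_{\ell-1}$, so the desired inequality is exactly the hypothesis $f_{\ell-1}\le f_{\ell-2}^{(n,\ell-1,{\bf t})}$ from (ii). Finally I would build $I=\bigoplus_{\ell\ge0}J_\ell$ verbatim as in the proof of Corollary~\ref{Cor:SubstituteTLex}, with $J_\ell$ the $K$-span of $L_\ell\cup\Shad_{\bf 0}(B_{\ell-1})$, $B_{-1}=\emptyset$, $B_{\ell-1}$ the monomials of $J_{\ell-1}$; the same reasoning there — using $\Shad_{\bf 0}([J_{\ell-1}])\subseteq[J_\ell]$, the identity $\Shad_{\bf t}(\Shad_{\bf 0}(B_{\ell-2}))=\Shad_{\bf t}(\Shad_{\bf t}(B_{\ell-2}))$, and the inclusions $\Shad_{\bf t}(L_{\ell-1})\subseteq L_\ell$ just obtained — shows $I$ is a ${\bf t}$-spread lex ideal with $[I_\ell]_{\bf t}=L_\ell$, whence $f_{{\bf t},\ell-1}(I)=|M_{n,\ell,{\bf t}}|-|L_\ell|=f_{\ell-1}$, i.e.\ $f_{\bf t}(I)=f$; and a ${\bf t}$-spread lex ideal is in particular ${\bf t}$-spread strongly stable.

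The step I expect to be the main obstacle is making the \textup{(ii)} $\Rightarrow$ \textup{(i)} construction honest — turning the prescribed system of lex sets $(L_\ell)$ into a genuine monomial ideal whose degree-$\ell$ ${\bf t}$-spread part is \emph{exactly} $L_\ell$, neither too small nor too large. That bookkeeping is precisely what was done for Corollary~\ref{Cor:SubstituteTLex}, and the one genuinely new ingredient needed to import it is the chain $\Shad_{\bf t}(L_{\ell-1})\subseteq L_\ell$, which I reduce above to the numerical hypotheses in (ii) through Theorem~\ref{Thm:M{n,l,t}VectorOp}. The only remaining nuisances are the handful of boundary cases where Definition~\ref{Def:vector-spreadOp} does not literally apply; each of these follows directly from the combinatorial statement of Theorem~\ref{Thm:M{n,l,t}VectorOp}, so they should not pose a real difficulty.
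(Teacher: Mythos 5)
Your proposal is correct and follows essentially the same route as the paper: (i)\,$\Rightarrow$\,(ii) reduces the inequality to a shadow comparison resolved by Theorem~\ref{Thm:M{n,l,t}VectorOp}, and (ii)\,$\Rightarrow$\,(i) reuses the ideal construction from the proof of Corollary~\ref{Cor:SubstituteTLex} after establishing $\Shad_{\bf t}(L_{\ell-1})\subseteq L_\ell$ by the same cardinality argument. The only cosmetic difference is in (i)\,$\Rightarrow$\,(ii): the paper first replaces $I$ by $I^{{\bf t},\lex}$ via Corollary~\ref{Cor:SubstituteTLex} and then works with the lex ideal directly, whereas you keep $I$ and invoke Theorem~\ref{Thm:BayerVectSpread} together with Lemma~\ref{Lemma:m_i}(b) inline to pass to the equinumerous lex set; these are the same ingredients, just packaged differently. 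One small caution: you appeal to ``monotonicity of $a\mapsto a^{(n,\ell,{\bf t})}$,'' which is not established anywhere in the paper from Definition~\ref{Def:vector-spreadOp} alone --- it does hold, but only as a consequence of Theorem~\ref{Thm:M{n,l,t}VectorOp}'s cardinality interpretation, which is in fact the route the paper takes to get $f_{\ell-1}\le|M_{n,\ell,{\bf t}}|$, so you should use that directly rather than treat monotonicity as known.
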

\begin{proof}
	(i) $\implies$ (ii). Assume that $f_{\bf t}(I)=f$. By Proposition \ref{Cor:SubstituteTLex}, we may replace $I$ by $I^{{\bf t},\lex}$ without changing the $f_{\bf t}$-vector. Thus, we may assume as well that $I$ is a ${\bf t}$-spread lex ideal. Then, $f_{-1}=f_{{\bf t},-1}(I)=1$ and for all $\ell\in[-1,d-2]$ we have $\Shad_{\bf t}([I_{\ell+1}]_{\bf t})\subseteq[I_{\ell+2}]_{\bf t}$. Hence,
	\begin{align*}
	f_{\ell+1}=f_{{\bf t},\ell+1}(I)=|M_{n,\ell+2,{\bf t}}|-|[I_{\ell+2}]_{\bf t}|\ &\le\ |M_{n,\ell+2,{\bf t}}|-|\Shad_{\bf t}([I_{\ell+1}]_{\bf t})|\\&=\ |M_{n,\ell+2,{\bf t}}\setminus\Shad_{\bf t}([I_{\ell+1}]_{\bf t})|\\&=\ f_{{\bf t},\ell}(I)^{(\ell+1,{\bf t})}=f_\ell^{(\ell+1,{\bf t})},
	\end{align*}
	where the last equality follows from Theorem \ref{Thm:M{n,l,t}VectorOp}. Statement (ii) is proved.\smallskip
	
	\noindent
	(ii) $\implies$ (i). First we prove that
	$$
	f_{\ell+1}\ \le\ |M_{n,\ell+2,{\bf t}}|,\ \ \ \text{for all}\ \ \ \ell=-2,\dots,d-2.
	$$
	For $\ell=-2$, $f_{-1}=1=|M_{n,0,{\bf t}}|$ because there is only one ${\bf t}$-spread monomial of degree 0, namely $u=1$. Now we proceed by induction. Let $\ell \geq -1$. By the hypothesis (ii), we have $f_{\ell+1}\le f_{\ell}^{(\ell+1,{\bf t})}$ and, by induction, $f_{\ell}\le|M_{n,\ell+1,{\bf t}}|$. Thus, there exists a unique monomial $u\in M_{n,\ell+1,{\bf t}}$ such that $|M_{n,\ell+1,{\bf t}}\setminus\mathcal{L}_{\bf t}^i(u)|=f_{\ell}$. By Theorem \ref{Thm:M{n,l,t}VectorOp}, we have $f_{\ell}^{(\ell+1,{\bf t})}=|M_{n,\ell+2,{\bf t}}\setminus\Shad_{\bf t}(\mathcal{L}_{\bf t}^i(u))|$. This shows that $f_{\ell}^{(\ell+1,{\bf t})}\le|M_{n,\ell+2,{\bf t}}|$ and consequently we have $f_{\ell+1}\le|M_{n,\ell+2,{\bf t}}|$, as desired.
	
	For all $\ell\in [0,d]$, let $L_\ell$ be the unique ${\bf t}$-spread lex set of $M_{n,\ell,{\bf t}}$ such that\linebreak $|L_{\ell}|=|M_{n,\ell,{\bf t}}|-f_{\ell-1}$. For $\ell>d$ we set $L_{\ell}=\emptyset$. As in Proposition \ref{Cor:SubstituteTLex}, we construct the ideal $I=\bigoplus_{\ell\ge0}I_\ell$ where $I_\ell$ is the $K$-vector space spanned by the set
	$$
	L_\ell\cup\Shad_{\bf 0}(B_{\ell-1}),
	$$
	where $B_{-1}=\emptyset$ and for $\ell \geq 1$, $B_{\ell-1}$ is the set of monomials generating $I_{\ell-1}$. As in Proposition \ref{Cor:SubstituteTLex}, one shows that $I$ is a {\bf t}-spread lex ideal. Hence, it remains to prove that $f_{\bf t}(I)=f$. As in the proof of Proposition \ref{Cor:SubstituteTLex}, this boils down to proving that $\Shad_{\bf t}(L_{\ell+1})\subseteq L_{\ell+2}$, for all $\ell\in [-1,d-2]$. Since $f_{\ell+1}\le f_{\ell}^{(\ell+1,{\bf t})}$ we have
	$$
	|M_{n,\ell+2,{\bf t}}\setminus L_{\ell+2}|\le|M_{n,\ell+1,{\bf t}}\setminus L_{\ell+1}|^{(\ell+1,{\bf t})}=|M_{n,\ell+2,{\bf t}}\setminus\Shad_{\bf t}(L_{\ell+1})|,
	$$
	where the last equality follows from Theorem \ref{Thm:M{n,l,t}VectorOp}. Thus, $|\Shad_{\bf t}(L_{\ell+1})|\le|L_{\ell+2}|$. Hence, $\Shad_{\bf t}(L_{\ell+1})\subseteq L_{\ell+2}$, because both are ${\bf t}$-spread lex sets. The proof is complete.
\end{proof}

\begin{Example}\em
	Let ${\bf t}=(1,0,2)$, $d=4$ and $n=6$. Consider the following vector
	\begin{align*}
	f\ &=\ (f_{-1},f_{0},f_{1},f_{2},f_{3})\ =\ (1,6,11,18,0).
	\end{align*}
	Then, $f_{-1}=1$ and $f_{\ell+1}\le f_{\ell}^{(\ell+1,{\bf t})}$, for all $\ell=-1,\dots,2$. Therefore, from Theorem \ref{thm:main} there exists a ${\bf t}$-spread strongly stable ideal of $S=K[x_1,\dots,x_6]$ that has $f$ as a $f_{\bf t}$-vector. The ideal $I$ of Example \ref{Ex:Itlex} is such an ideal.
\end{Example}

\section{An application}\label{sec4}
In this final section, as an application we recover the vector-spread version of the well--known result proved by Bigatti \cite{BAM} and Hulett \cite{HH}, independently (see, also, \cite{HH2, KP}). More precisely, we prove that in the class of all ${\bf t}$-spread strongly stable ideals with a given $f_{\bf t}$-vector, the ${\bf t}$-spread lex ideals have the largest graded Betti numbers.

\begin{Theorem}\label{thm:upperbound}
	Let  $I\subset S=K[x_1,\dots,x_n]$ be a ${\bf t}$-spread strongly stable ideal. Then,
	$$
	\beta_{i,j}(I)\ \le\ \beta_{i,j}(I^{{\bf t},\lex}), \ \ \ \text{for all}\ i\ \text{and}\ j.
	$$
\end{Theorem}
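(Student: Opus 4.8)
The plan is to reduce the statement to the numerical inequality of Theorem~\ref{Thm:BayerVectSpread} by means of the Eliahou--Kervaire type formula for the graded Betti numbers of ${\bf t}$-spread strongly stable ideals; no hypothesis on $\chara K$ is needed. We may assume $\indeg(I)\ge 1$, as $I=S$ is trivial. Recall from \cite{F1} that every ${\bf t}$-spread strongly stable ideal $I$ admits an Eliahou--Kervaire type minimal free resolution, which gives, for all $i$ and all $j\ge 1$,
$$\beta_{i,i+j}(I)=\sum_{u\in G(I)_j}\binom{\max(u)-c_j}{i},\qquad c_j:=1+\sum_{k=1}^{j-1}t_k.$$
For ${\bf t}={\bf 0}$, resp.\ ${\bf t}={\bf 1}$, resp.\ ${\bf t}$ constant, this specializes to the classical formulas of Eliahou--Kervaire, resp.\ Aramova--Herzog--Hibi, resp.\ \cite{EHQ}. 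Every ${\bf t}$-spread monomial of degree $j$ has $\max(u)\ge c_j$, so all the binomial coefficients above have non--negative upper entry; moreover $G(I)_j=\emptyset$ for $j>d$, and the formula applies verbatim to $I^{{\bf t},\lex}$, which being ${\bf t}$-spread lex is ${\bf t}$-spread strongly stable. Thus only the range $1\le j\le d$ must be considered.

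Since $I$ is ${\bf t}$-spread, $G(I)_j=[I_j]_{\bf t}\setminus\Shad_{\bf t}([I_{j-1}]_{\bf t})$ with $\Shad_{\bf t}([I_{j-1}]_{\bf t})\subseteq[I_j]_{\bf t}$, so $m_\ell(G(I)_j)=m_\ell([I_j]_{\bf t})-m_\ell(\Shad_{\bf t}([I_{j-1}]_{\bf t}))=m_\ell([I_j]_{\bf t})-m_{\le\ell-t_{j-1}}([I_{j-1}]_{\bf t})$, the last step by Lemma~\ref{Lemma:m_i}(a) (the subtracted term being $0$ when $j=1$). Grouping the generators according to the value of $\max$, the Betti formula turns into
$$\beta_{i,i+j}(I)=\underbrace{\sum_{\ell}\binom{\ell-c_j}{i}m_\ell([I_j]_{\bf t})}_{A_j(I)}-\underbrace{\sum_{\ell}\binom{\ell-c_j}{i}m_{\le\ell-t_{j-1}}([I_{j-1}]_{\bf t})}_{B_j(I)},$$
and identically for $I^{{\bf t},\lex}$. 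It therefore suffices to prove $A_j(I)\le A_j(I^{{\bf t},\lex})$ and $B_j(I)\ge B_j(I^{{\bf t},\lex})$ for every $j$.

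By Corollary~\ref{Cor:SubstituteTLex}, $f_{\bf t}(I)=f_{\bf t}(I^{{\bf t},\lex})$, hence $|[I_\ell]_{\bf t}|=|[I^{{\bf t},\lex}_\ell]_{\bf t}|$ for all $\ell$; applying Theorem~\ref{Thm:BayerVectSpread} to the ${\bf t}$-spread lex set $[I^{{\bf t},\lex}_\ell]_{\bf t}$ and the ${\bf t}$-spread strongly stable set $[I_\ell]_{\bf t}$, of equal cardinality, yields $m_{\le k}([I^{{\bf t},\lex}_\ell]_{\bf t})\le m_{\le k}([I_\ell]_{\bf t})$ for all $k$ and $\ell$. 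For $B_j$ this is enough, the weights $\binom{\ell-c_j}{i}$ being non--negative. For $A_j$, set $M_\ell:=m_{\le\ell}([I_j]_{\bf t})$; Abel summation together with Pascal's identity $\binom{\ell+1-c_j}{i}-\binom{\ell-c_j}{i}=\binom{\ell-c_j}{i-1}$ gives, for $i\ge 1$,
$$A_j(I)=\binom{n-c_j}{i}\big|[I_j]_{\bf t}\big|-\sum_{\ell}\binom{\ell-c_j}{i-1}M_\ell,$$
whose first summand is the same for $I$ and $I^{{\bf t},\lex}$ because $|[I_j]_{\bf t}|$ is, and whose subtracted weights are non--negative; by the inequality just established this forces $A_j(I)\le A_j(I^{{\bf t},\lex})$. (When $i=0$ one argues directly from $\beta_{0,j}=|[I_j]_{\bf t}|-|\Shad_{\bf t}([I_{j-1}]_{\bf t})|$ and Lemma~\ref{Lemma:m_i}(b).) Combining the two bounds yields $\beta_{i,i+j}(I)\le\beta_{i,i+j}(I^{{\bf t},\lex})$ for all $i,j$.

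The only ingredient external to this paper is the vector-spread Eliahou--Kervaire formula invoked at the start; once it is available, the argument is entirely combinatorial, and the point demanding care is the bookkeeping of signs. Theorem~\ref{Thm:BayerVectSpread} \emph{decreases} every partial sum $m_{\le k}$ upon passing to the ${\bf t}$-spread lex ideal; this directly \emph{decreases} $B_j$ but, owing to the sign reversal produced by Abel summation, \emph{increases} $A_j$, so both contributions make $\beta_{i,i+j}$ largest for the ${\bf t}$-spread lex ideal.
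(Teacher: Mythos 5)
Your proof is correct and follows essentially the same route as the paper: the vector-spread Eliahou--Kervaire formula from \cite[Corollary 5.2]{F1}, the decomposition $G(I)_j=[I_j]_{\bf t}\setminus\Shad_{\bf t}([I_{j-1}]_{\bf t})$, Abel summation on the $\max$-statistic to pass from $m_\ell$ to $m_{\le\ell}$, and Theorem~\ref{Thm:BayerVectSpread} to compare the partial sums $m_{\le k}$ of $I$ and $I^{{\bf t},\lex}$. The only difference is bookkeeping: you prove monotonicity of $A_j$ and $B_j$ separately, whereas the paper substitutes into the single rearranged expression for $\beta_{i,i+j}$; the mathematical content and the sign analysis are identical.
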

\begin{proof}
	By \cite[Corollary 5.2]{F1}, 
	we have
	\begin{equation}\label{eneherzogqureshiformulabetti}
	\beta_{i,i+j}(I)\ =\ \sum_{u\in G(I)_j}\binom{\max(u)-1-\sum_{h=1}^{j-1}t_h}{i}.
	\end{equation}
	We are going to write (\ref{eneherzogqureshiformulabetti}) in a more suitable way. We observe that $I$ is a {\bf t}-spread ideal and thus 
	$$
	G(I)_j\ =\ [I_j]_{\bf t}\setminus \Shad_{\bf t}([I_{j-1}]_{\bf t}).
	$$
	Hence, we can write the Betti number in (\ref{eneherzogqureshiformulabetti}) as a difference $A-B$, where
	\begin{align*}
	A\ =&\ \sum_{u\in G([I_j]_{\bf t})} \binom{\max(u)-1-\sum_{h=1}^{j-1}t_h}{i}=\sum_{k=1}^n m_k([I_j]_{\bf t})\binom{k-1-\sum_{h=1}^{j-1}t_h}{i}\\
	=&\ \sum_{k=1}^n\Big(m_{\le k}([I_j]_{\bf t})-m_{\le k-1}([I_j]_{\bf t})\Big)\binom{k-1-\sum_{h=1}^{j-1}t_h}{i}\\
	=&\ \sum_{k=1}^nm_{\le k}([I_j]_{\bf t})\binom{k-1-\sum_{h=1}^{j-1}t_h}{i}-\sum_{k=1}^{n-1}m_{\le k}([I_j]_{\bf t})\binom{k-\sum_{h=1}^{j-1}t_h}{i}
	\end{align*}
	and
	\begin{align*}
	B\ =&\ \sum_{u\in \Shad_{\bf t}([I_{j-1}]_{\bf t})}\binom{\max(u)-1-\sum_{h=1}^{j-1}t_h}{i}\\=&\ \sum_{k=1+\sum_{h=1}^{j-1}t_h}^n\!\!\!\!m_k(\Shad_{\bf t}([I_{j-1}]_{\bf t}))\binom{k-1-\sum_{h=1}^{j-1}t_h}{i}\\=&\ \sum_{k=1+\sum_{h=1}^{j-1}t_h}^n m_{\le k-t_{j-1}}([I_{j-1}]_{\bf t})\binom{k-1-\sum_{h=1}^{j-1}t_h}{i},
	\end{align*}
	where the last equality follows from Lemma \ref{Lemma:m_i}(a).
	
	Furthermore, we can write $A=A_1-A_2$ with
	\begin{align*}
	A_1\ =&\ m_{\le n}([I_j]_{\bf t})\binom{n-1-\sum_{h=1}^{j-1}t_h}{i},\\[0.8em]
	A_2\ =&\ \sum_{k=1}^{n-1}m_{\le k}([I_j]_{\bf t})\bigg[\binom{k-\sum_{h=1}^{j-1}t_h}{i}-\binom{k-1-\sum_{h=1}^{j-1}t_h}{i}\bigg]\\
	=&\ \sum_{k=1+\sum_{h=1}^{j-1}t_h}^{n-1}m_{\le k}([I_j]_{\bf t})\binom{k-1-\sum_{h=1}^{j-1}t_h}{i-1}.
	\end{align*}
	
	Therefore, we obtain
	\begin{equation}\label{presentationbettinumbers}
	\begin{aligned}
	\beta_{i,i+j}(J)\ &=\ m_{\le n}([I_j]_{\bf t})\binom{n-1-\sum_{h=1}^{j-1}t_h}{i}\\
	&-\sum_{k=1+\sum_{h=1}^{j-1}t_h}^{n-1}m_{\le k}([I_j]_{\bf t})\binom{k-1-\sum_{h=1}^{j-1}t_h}{i-1}\\
	&-\sum_{k=1+\sum_{h=1}^{j-1}t_h}^n m_{\le k-t_{j-1}}([I_{j-1}]_{\bf t})\binom{k-1-\sum_{h=1}^{j-1}t_h}{i}.
	\end{aligned}
	\end{equation}
	
	Now, we compute the graded Betti numbers $\beta_{i,i+j}(I^{{\bf t},\lex})$. 
	Recall that $I$ and $I^{{\bf t},\lex}$ share the same $f_{\bf t}$-vector. Therefore, $|[I^{{\bf t},\lex}_j]_{\bf t}|=|[I_{j}]_{\bf t}|$, for all $j$. 
	Applying Theorem \ref{Thm:BayerVectSpread}, we have $m_{\le k}([I^{{\bf t},\lex}_j]_{\bf t})\le m_{\le k}([I_j]_{\bf t})$ for all $k\in[n]$. Moreover,
	$$
	m_{\le n}([I_j]_{\bf t})=|[I_j]_{\bf t}|=|[I^{{\bf t},\lex}_j]_{\bf t}|=m_{\le n}([I^{{\bf t},\lex}_j]_{\bf t}).
	$$
	Therefore, replacing in (\ref{presentationbettinumbers}), for all $k$ and $j$, every occurrence of $m_{\le k}([I_j]_{\bf t})$ with $m_{\le k}([I^{{\bf t},\lex}_j]_{\bf t})$, we get the Betti number $\beta_{i,i+j}(I^{{\bf t},\lex})$. Finally, $\beta_{i,i+j}(I)\le\beta_{i,i+j}(I^{{\bf t},\lex})$, for all $i,j\ge0$.
\end{proof}
	
\begin{Remark}\em
	Note that in the previous result, we allow $K$ to be an arbitrary field.
\end{Remark}
	
\begin{Example}\em
	Consider again the ${\bf t}$-spread strongly stable ideal $I$ of Example \ref{Ex:Itlex}. Then, the Betti tables of $I$ and $I^{{\bf t},\lex}$ are, respectively,
	$$
	\begin{matrix}
	  & 0 & 1 & 2\\
	\hline
	2:& 4 & 4 & 1 \\
	3:& 1 & 2 & 1 \\
	4:& 1 & 2 & 1
	\end{matrix}\qquad\qquad\qquad
	\begin{matrix}
	   & 0 & 1 & 2 & 3 & 4\\ 
	\hline
	2: & 4 & 6 & 4 & 1 & .\\
	3: & 3 & 7 & 7 & 4 & 1\\
	4: & 2 & 4 & 2 & . & .
	\end{matrix}
	$$
	From these tables we see that $\beta_{i,i+j}(I)\le\beta_{i,i+j}(I^{{\bf t},\lex})$ for all $i$ and $j$.
\end{Example}
\medskip

\subsection*{Acknowledgments}
The authors thank the anonymous referee for his/her careful reading and helpful suggestions, that allowed us to improve the quality of the paper. The authors acknowledge support of the GNSAGA of INdAM (Italy).


\end{document}